\newtheorem{theorem}{Theorem}[section]
\newtheorem{lemma}[theorem]{Lemma}
\newtheorem{problem}[theorem]{Problem}
\newtheorem{conjecture}[theorem]{Conjecture}
\newtheorem{proposition}[theorem]{Proposition}
\theoremstyle{definition}
\date{}
\title{Infinite families of planar graphs of a given injective chromatic number}
\author{\sc 
Matias DANEELS\footnote{Department of Computer Science, KU Leuven Campus Kulak-Kortrijk, 8500 Kortrijk, Belgium}\;,
Jan GOEDGEBEUR\footnotemark[1]\;\footnote{Department of Applied Mathematics, Computer Science and Statistics, Ghent University, 9000 Ghent, Belgium}\;
and Jarne RENDERS\footnotemark[1]\;
\footnote{E-mail addresses:
    matias.daneels@student.kuleuven.be,
    \{jan.goedgebeur,
    jarne.renders\}@kuleuven.be
    }
}
\begin{document}

\maketitle

\begin{center}
\begin{minipage}{125mm}
\textbf{Abstract.}
An \emph{injective colouring} of a graph is a colouring in which every two vertices sharing a common neighbour receive a different colour. Chen, Hahn, Raspaud and Wang conjectured that every planar graph of maximum degree $\Delta \ge 3$ admits an injective colouring with at most $\lfloor 3\Delta/2\rfloor$ colours. This was later disproved by Lu\v zar and \v Skrekovski for certain small and even values of $\Delta$ and they proposed a new refined conjecture.
Using an algorithm for determining the \emph{injective chromatic number} of a graph, i.e.\ the smallest number of colours for which the graph admits an injective colouring, we give computational evidence for Lu\v zar and \v Skrekovski's conjecture and extend their results by presenting an infinite family of $3$-connected planar graphs for each $\Delta$ (except for $4$) attaining their bound, 
whereas they only gave a finite amount of examples for each $\Delta$. Hence, together with another infinite family of maximum degree $4$, we provide infinitely many counterexamples to the conjecture by Chen et al.\ for each $\Delta$ if $4\le \Delta \le 7$ and every even $\Delta \ge 8$.
We provide similar evidence for analogous conjectures by La and \v Storgel and Lu\v zar, \v Skrekovski and Tancer when the girth is restricted as well. Also in these cases we provide infinite families of $3$-connected planar graphs attaining the bounds of these conjectures for certain maximum degrees $\Delta\geq 3$.


\bigskip

\textbf{Keywords.} Injective chromatic number, Computation, Infinite family, Maximum degree, Planar graphs 
\bigskip

\textbf{MSC 2020.} 05C10, 05C15, 05C85, 68R10, 90C35

\end{minipage}
\end{center}


\section{Introduction}\label{sect:introduction}

In this paper all graphs will be simple and undirected.
For $k > 0$, an \emph{injective $k$-colouring} of a graph $G$ is a function $c:V(G)\rightarrow \{1,\ldots, k\}$ such that for all pairs of distinct vertices $v,w\in V(G)$ sharing a common neighbour, $c(v)\neq c(w)$.
The \textit{injective chromatic number} of a graph $G$, denoted by $\chi_i(G)$, is the smallest $k$ for which $G$ has an injective $k$-colouring. These notions were first introduced by Hahn, Kratochv\'il, \v Sir\'a\v n and Sotteau~\cite{HKSS02} in 2002. In 2012, Chen, Hahn, Raspaud, and Wang~\cite{chen2012some} investigated $K_4$-minor-free graphs, a subclass of planar graphs, and showed that if $G$ is such a graph of maximum degree $\Delta$, then $\chi_i(G) \leq \left\lceil \frac{3}{2} \Delta \right\rceil$. They conjectured that the same bound holds for all planar graphs.

\begin{conjecture}[Chen, Hahn, Raspaud, and Wang~\cite{chen2012some}]
For every planar graph $G$ with maximum degree $\Delta$ it holds that 
\[\chi_i(G) \leq \left\lceil \dfrac{3}{2} \Delta \right\rceil.\]
\label{conj:chen}
\end{conjecture}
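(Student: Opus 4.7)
Since the final statement is a conjecture and the paper's abstract already signals that Lu\v zar and \v Skrekovski have refuted it for certain values of $\Delta$, my proof proposal is essentially an account of the natural attack, together with the reason I expect it to fail. The plan is to use the minimum-counterexample plus discharging paradigm that has proved effective for many bounds of the form ``$\chi_{\star}(G)\le f(\Delta)$'' on planar graphs, including Chen, Hahn, Raspaud and Wang's original $K_4$-minor-free case.

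First, I would fix $\Delta\geq 3$ and assume for contradiction that $G$ is a planar graph of maximum degree $\Delta$ with $\chi_i(G) > \lceil 3\Delta/2\rceil$ and $|V(G)|$ minimum. The next step is a series of \emph{reducibility lemmas}: for instance, $G$ has no vertex of degree $\leq 1$; no two adjacent vertices both of very low degree; no vertex $v$ of degree $d$ whose second neighbourhood has size strictly less than $\lceil 3\Delta/2\rceil$; and so on. Each such lemma is proved by removing or identifying a small piece, applying minimality to injectively $\lceil 3\Delta/2\rceil$-colour the remainder, and extending by a counting argument that bounds the number of colours forbidden at each restored vertex by the size of its second neighbourhood.

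Second, I would run a discharging argument on a plane embedding of $G$. Assign initial charge $d(v)-4$ to each vertex $v$ and $d(f)-4$ to each face $f$, so that Euler's formula yields a total charge of $-8$. I would then design rules that move charge from large-degree vertices and long faces to the low-degree vertices and short faces that the reducibility lemmas forbid, with the aim of showing every element ends nonnegative, contradicting the total $-8$.

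The main obstacle, and the reason this plan cannot actually be completed, is the content of the paper itself: the counterexamples of Lu\v zar and \v Skrekovski --- now extended to infinite $3$-connected families for each $\Delta\in\{4,\dots,7\}$ and each even $\Delta\geq 8$ --- mean that no list of reducible configurations can be simultaneously unavoidable in every planar graph of maximum degree $\Delta$ and reducible for the bound $\lceil 3\Delta/2\rceil$. Concretely, I would expect the attempted proof to break down at the reducibility step for vertices of moderate degree whose second neighbourhoods embed a Lu\v zar--\v Skrekovski configuration: too many pairs there share a common neighbour, and the counting argument requires $\lceil 3\Delta/2\rceil+1$ colours. This is exactly what motivates the refined conjecture and the computational program pursued in the remainder of the paper in place of Conjecture~\ref{conj:chen}.
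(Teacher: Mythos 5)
This statement is a conjecture, and the paper contains no proof of it; on the contrary, the paper's whole point is that it is \emph{false} for every $\Delta$ with $4\le\Delta\le 7$ and every even $\Delta\ge 8$, which the authors establish by exhibiting infinite families of $3$-connected planar counterexamples (Theorems~\ref{theor:pentagonFamily} and~\ref{thm:inf_families}, building on the finite diameter-$2$ counterexamples of Lu\v zar and \v Skrekovski). Your proposal correctly recognises this and, appropriately, does not actually claim to prove the statement, so there is no disagreement with the paper on the substance.

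Two precisions are worth adding. First, your concluding claim that ``no list of reducible configurations can be simultaneously unavoidable and reducible for the bound $\lceil 3\Delta/2\rceil$'' is only justified for the refuted values of $\Delta$: the conjecture remains open for $\Delta=3$ and for odd $\Delta\ge 9$, and for $\Delta=3$ the paper's computations show any cubic counterexample must have at least $36$ vertices, so a discharging proof in those cases is not ruled out. Second, the mechanism by which the known counterexamples defeat the bound is more blunt than your description suggests: they are planar graphs of diameter $2$ in which every edge lies on a triangle, so \emph{every} pair of vertices must receive distinct colours and $\chi_i(G)=|V(G)|>\lceil 3\Delta/2\rceil$; it is not a local second-neighbourhood obstruction at one vertex of moderate degree but a global one. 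Your sketch of the minimum-counterexample-plus-discharging strategy is a fair account of why the $K_4$-minor-free case of Chen et al.\ works and why one might have hoped it extends, but as a review of the statement itself the correct verdict is simply that it is an open-or-refuted conjecture, not a provable theorem.
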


However, in 2015, Lu\v{z}ar and \v{S}krekovski~\cite{luvzar2015counterexamples} disproved Conjecture~\ref{conj:chen} by providing a (finite) number of counterexamples for each $\Delta$ with $4\le \Delta \le 7$ and each even $\Delta\ge 8$, leaving the original conjecture open in particular for $\Delta = 3$. 
Using their counterexamples as an upper bound, Lu\v{z}ar and \v{S}krekovski proposed a new conjecture, similar to the famous conjecture by Wegner~\cite{wegner1977graphs} for the related \emph{$2$-distance colourings}, where no two vertices at distance at most $2$ can receive the same colour.

\begin{conjecture}[Lu\v{z}ar and \v{S}krekovski~\cite{luvzar2015counterexamples}]
 For every planar graph $G$ with maximum degree $\Delta$ it holds that
\[
  \chi_i(G) \leq 
     \begin{cases}
       5,  & \text{if } \Delta \leq 3;\\
       \Delta + 5,  & \text{if } 4 \leq \Delta \leq 7;\\
       \lfloor \frac{3}{2} \Delta \rfloor + 1, & \text{if } \Delta \geq 8.\\
     \end{cases}
\] 
\label{conj:luzar}
\end{conjecture}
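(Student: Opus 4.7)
The plan is to convert injective colouring into an ordinary proper colouring problem via the auxiliary graph $H$ on vertex set $V(G)$ whose edges are exactly the pairs of vertices that share a common neighbour in $G$; then $\chi_i(G)=\chi(H)$. Although $H$ is typically non-planar, it inherits strong structural constraints from $G$: every neighbourhood of $G$ becomes a clique in $H$, so $\omega(H)\le\Delta(G)$, and the sparsity of $G$ bounds the edge density of $H$. This reformulation lets one borrow techniques from the literature on $2$-distance colouring, where the auxiliary graph is $G$ together with these second-neighbour edges.

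I would then argue by contradiction using a minimum counterexample, following the standard template for sharp planar colouring bounds. Let $G$ be a planar graph of maximum degree $\Delta$ minimizing $|V(G)|+|E(G)|$ among those violating the conjectured inequality. Using Euler's formula I would assign each vertex the initial charge $d(v)-4$ and each face the charge $\deg(f)-4$, so the total charge is $-8$. Next I would design discharging rules that redistribute charge to force every vertex and face to end with non-negative charge unless $G$ contains a \emph{reducible configuration}: a local substructure that can be deleted, the rest injectively coloured by minimality, and the deleted piece coloured back by a counting argument against the at most $\Delta(\Delta-1)$ colours forbidden at any vertex. The three ranges of $\Delta$ in the statement would each require their own suite of rules: for $\Delta\ge 8$ the bound $\lfloor 3\Delta/2\rfloor+1$ leaves modest slack and fairly conventional configurations should suffice; for $4\le\Delta\le 7$ the tighter bound $\Delta+5$ demands finer rules; and the cubic case $\Delta\le 3$, with bound $5$, remains open even for the original Conjecture~\ref{conj:chen} and would likely require an ad hoc structural analysis, plausibly combined with an exhaustive case check in the spirit of the algorithm mentioned in the abstract.

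The main obstacle is precisely the sharpness of the bounds. Because the present paper exhibits infinite families of $3$-connected planar graphs attaining each bound, any reducibility argument must be delicate enough to avoid these extremal configurations while still covering every non-extremal planar graph. Concretely, if the discharging scheme is even slightly too aggressive it will claim reducibility inside one of the infinite families, yielding a false proof; if it is too timid it will leave some vertices or faces with negative charge and prove nothing. I therefore expect that a single naive pass of discharging will at best yield a slightly weaker inequality of the form $\chi_i(G)\le \lfloor 3\Delta/2 \rfloor + O(1)$, and that closing the final additive constant will demand a highly intricate multi-pass discharging scheme, or a genuinely new structural insight into the extremal graphs — mirroring the current state of the art on Wegner's conjecture for $2$-distance colouring, to which this conjecture is closely parallel and to which it is presumably of comparable difficulty.
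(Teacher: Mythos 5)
The statement you are asked about is a \emph{conjecture}, and the paper does not prove it: it only provides computational evidence (verifying it for all planar graphs up to order $13$, and for $3$-connected, $4$-connected and cubic planar graphs up to somewhat larger orders) together with infinite families showing the bounds are sharp. Your proposal is likewise not a proof but a plan, and the plan is missing precisely the content that would constitute a proof: you give no concrete discharging rules, no list of reducible configurations, and no argument that the configurations you would need are actually reducible. Indeed you concede this yourself when you write that a single pass of discharging would at best yield $\chi_i(G)\le\lfloor 3\Delta/2\rfloor+O(1)$ and that closing the gap would demand ``a genuinely new structural insight.'' That insight is exactly what is absent, so there is nothing here to check against the paper's (nonexistent) proof. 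The honest conclusion is that the conjecture remains open for every range of $\Delta$, including the cubic case, where even the weaker Conjecture~\ref{conj:chen} is unresolved and the paper only shows a counterexample must have at least $36$ vertices.

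One concrete error in your setup is worth flagging. You write that since every neighbourhood of $G$ becomes a clique in the auxiliary graph $H=G^{(2)}$, one gets $\omega(H)\le\Delta(G)$. The implication goes the wrong way: neighbourhoods becoming cliques gives $\omega(H)\ge\Delta(G)$, and in general $\omega(H)$ can be much larger than $\Delta$. The extremal examples in the paper make this vivid: they are diameter-$2$ planar graphs in which every edge lies on a triangle, so \emph{every} pair of vertices shares a common neighbour, $H$ is complete, and $\omega(H)=\chi_i(G)=|V(G)|$. Any approach that tries to control $\chi(H)$ via its clique number is therefore doomed from the start; the difficulty of the conjecture lies precisely in bounding $\chi(H)$ for a graph $H$ that is dense, non-planar, and has large cliques, using only the planarity and degree bound of the underlying $G$.
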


When restricting oneself to graphs of higher girths, Conjecture~\ref{conj:chen} may still be true and even stronger conjectures have been posed in this case. (The \textit{girth} of a graph is the length of its shortest cycle.) La and \v Storgel~\cite{la20232} conjectured that for planar graphs $G$ with girth at least $4$ and maximum degree $\Delta\ge 3$
it holds that
$\chi_i(G)\le \lfloor\frac{3}{2}\Delta\rfloor$. For girth at least $5$, Lu\v{z}ar, \v{S}krekovski and Tancer~\cite{LST09} asked if there is an integer $M$ such that every planar graph $G$ of girth at least $5$ with maximum degree $\Delta\ge M$ is injectively $(\Delta + 1)$-colourable. This mirrors a similar conjecture by Molloy and Salavatipour~\cite{MS05} in the $2$-distance colouring case, where certain upper bounds can be shown if the maximum degree is sufficiently large.

The counterexamples to Conjecture~\ref{conj:chen} which were found by Lu\v{z}ar and \v{S}krekovski~\cite{luvzar2015counterexamples} are all of \emph{diameter} $2$, i.e.\ the maximum length of a shortest path between two vertices is at least $2$, and they use this fact to show that the injective chromatic number of these graphs is equal to their order. One can easily see that for each maximum degree $\Delta$ only a finite number of diameter $2$ counterexamples can exist. 
Hence, this begs the question of whether the conjecture can be saved by excluding graphs of diameter~$2$ or by requiring some other properties and of whether we can find infinitely many counterexamples to Conjecture~\ref{conj:chen} for each maximum degree.

The \emph{connectivity} of a graph is the minimum number of vertices needed to remove before the graph becomes disconnected. If a graph cannot be disconnected by removing $k$ vertices, we say it is \emph{$k$-connected}.
As we will see later, for any $\Delta \geq 3$ (except $\Delta = 4$) infinitely many examples of connectivity $1$ or $2$ attaining the bound of Conjecture~\ref{conj:luzar}, can easily be obtained, but we will show that requiring the connectivity to be $3$ is also not sufficient to save Conjecture~\ref{conj:chen}.


It has been shown that planar graphs $G$ of girth at least~$7$ and sufficiently high maximum degree $\Delta$ have $\chi_i(G) = \Delta$~\cite{BI11}. When the girth is at least $6$, $\chi_i(G)\leq \Delta +1$ for sufficiently high maximum degree $\Delta$~\cite{DL13} and for any $\Delta \geq 3$ there exist examples of planar graphs
of girth $6$ with $\chi_i(G) = \Delta +1$~\cite{LST09}.
When the girth is at least $5$, $\chi_i(G)\leq \Delta + 3$ for sufficiently large $\Delta$~\cite{BY22, DL14}, but to the best of our knowledge no planar girth $5$ examples with $\chi_i(G) > \Delta + 1$ are known.
For a more detailed overview, see~\cite[Table~1]{la20232}.

In this paper we study Conjecture~\ref{conj:luzar} and similar questions for girth at least $4$ and $5$ using computational means and verify them for small orders.
We do this by designing and implementing an exact backtracking algorithm, which can be found on GitHub~\cite{DGR24Program}, to calculate the injective chromatic number of a graph and use it to determine the injective chromatic number of all small planar graphs. For each $\Delta \ge 3$, except for $4$, we present infinite families of planar graphs in which every graph has maximum degree $\Delta$ and which attain the bound for the injective chromatic number proposed by Lu\v{z}ar and \v{S}krekovski in Conjecture~\ref{conj:luzar}.
For $\Delta = 4$, we present an infinite family of planar graphs which are counterexamples to Conjecture~\ref{conj:chen} by Chen et al., but which are one less than the bound proposed by Lu\v{z}ar and \v{S}krekovski in Conjecture~\ref{conj:luzar}. Moreover, every graph in each of our families is $3$-connected. Hence, we also give for each maximum degree $\Delta$ with $4\leq \Delta \leq 7$ and every even $\Delta \geq 8$ an infinite family of $3$-connected graphs which are counterexamples to Conjecture~\ref{conj:chen}.


The rest of this paper is organised as follows.
In Section~\ref{sect:results}, we briefly describe the algorithm we used to compute the injective chromatic number of a graph and provide computational results showing that Conjecture~\ref{conj:luzar} holds up to order at least $13$ and show that a cubic counterexample to Conjecture~\ref{conj:chen} must have at least $36$ vertices.
Furthermore, we also correct a slight error from~\cite{luvzar2015counterexamples} concerning a graph in Figure 3 of their paper.

In Section~\ref{sec:families}, we provide infinitely many counterexamples to Conjecture~\ref{conj:chen} when $\Delta = 4$ and infinite families of graphs
which are sharp for Conjecture~\ref{conj:luzar} in the case of $\Delta = 3$ or $\Delta\ge 5$.

In Section~\ref{sec:girth}, we provide two infinite families 
of $3$-connected planar graphs of girth $4$ with maximum degree $\Delta = 3$ and maximum degree $\Delta = 4$, for which the injective chromatic number is sharp for a conjecture by  La and \v Storgel~\cite{la20232}. We also provide an infinite family of $3$-connected planar graphs of girth~$5$ with maximum degree $\Delta = 3$. 




\subsection{Notation} 
For a vertex $v\in V(G)$, we denote by $N_G(v)$ the set of its neighbours and $N_G[v] := N_G(v)\cup \{v\}$. Let $V\subset{V(G)}$, then $N_G[V] = \bigcup_{v\in V} N_G[v]$. We sometimes drop the subscript $G$, when it is clear from the context which graph is meant.
We denote the maximum degree of a graph by $\Delta(G)$.

A \emph{plane} graph is a planar graph embedded in the Euclidean plane. A triangle $uvw$ in a plane graph is a \emph{facial} triangle if it is the boundary of a face. A famous theorem by Whitney~\cite{Wh33}
states that $3$-connected planar graphs have a unique embedding. Therefore, it makes sense to call a triangle of a planar graph facial if the graph is $3$-connected.

\section{Computational Results} \label{sect:results}


In order to verify Conjecture~\ref{conj:luzar} for small graphs, we implemented a backtracking algorithm for determining the injective chromatic number of a given graph. Starting by choosing a vertex of maximum degree and giving all of its neighbours a different colour, we recursively add all possible colours to an uncoloured vertex.
This vertex $v$ is chosen such that the number of colours used by the vertices at distance $2$ of $v$ is maximum. (This is an injective variant of the well-known saturation degree used for the classical chromatic number.) We prune the search whenever we use at least as many colours as are used in a previously found injective colouring of the graph. 

We performed various tests to verify the correctness of our implementation, see Appendix~\ref{app:correctness}.
Our implementation of this algorithm is open source and can be found on GitHub~\cite{DGR24Program}.

For every order $n$, we generated all connected graphs of minimum degree $2$ with at most $3n-6$ edges using the generator \texttt{geng} and filtered the planar ones using \texttt{planarg}, two utilities part of McKay's \texttt{nauty} package~\cite{Mc14}. This is one of the best existing methods for generating planar graphs of general connectivity. For each of these graphs, we then determined their injective chromatic number using our algorithm.

It is not necessary to consider vertices of degree $1$, since if a graph $G$ with a pendant edge $e$ incident to vertex $w$ of degree $1$ would be a counterexample to Conjecture~\ref{conj:luzar}, then $G - w$ 
is a counterexample since $\chi_i(G-w) \geq \chi_i(G) - 1$ and $\Delta(G-w) \geq \Delta(G) - 1$ and $\chi_i(G-w) = \chi_i(G) - 1$ can only happen if one of the endpoints of $e$ has degree $\Delta(G) = \chi_i(G)$ in $G$, hence $\Delta(G-w)=\Delta(G) - 1$.

We obtain the following results.

\begin{proposition}
    Conjecture~\ref{conj:luzar} holds up to order $13$.
\end{proposition}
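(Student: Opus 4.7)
The plan is entirely computational: for each $n \le 13$, enumerate all planar graphs of order $n$ and, for each one, certify that its injective chromatic number is within the bound prescribed by Conjecture~\ref{conj:luzar} for its maximum degree. The argument in the preceding paragraph shows that it suffices to enumerate only connected graphs of minimum degree at least $2$; these can be produced using \texttt{geng} together with \texttt{planarg} as described above.

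Before invoking the backtracking algorithm, I would apply a cheap preprocessing filter: whenever $|V(G)|$ is already at most the Lu\v{z}ar--\v{S}krekovski bound for $\Delta(G)$, the trivial estimate $\chi_i(G) \le |V(G)|$ settles the conjecture for $G$. Since the bound is $5$ for $\Delta \le 3$, equals $\Delta+5$ for $4 \le \Delta \le 7$, and equals $\lfloor 3\Delta/2 \rfloor + 1$ for $\Delta \ge 8$, this disposes of essentially all small-order graphs and leaves only a restricted regime to inspect seriously, most notably cubic planar graphs with $n$ up to $13$ together with a handful of borderline higher-$\Delta$ graphs at orders approaching $13$.

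For each surviving graph I would run the backtracking algorithm from this section in early-termination mode: instead of computing $\chi_i(G)$ exactly, stop as soon as an injective colouring using at most the target number of colours is found, and cut any branch that tries to use more. Correctness of the final conclusion then rests on two ingredients: the exhaustiveness of \texttt{geng} and \texttt{planarg}, which is well-established, and the correctness of the backtracking implementation, which is verified independently in Appendix~\ref{app:correctness}.

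The main obstacle is the sheer number of planar graphs to process at order~$13$, particularly cubic ones, where the target is only $5$ so each instance requires genuine search. I expect that the saturation-degree branching rule, together with the aggressive pruning described above, keeps the overall computation feasible; still, this is the step where essentially all of the running time is concentrated, and it may be necessary to split the computation across orders and maximum degrees and aggregate the results to make the run practical.
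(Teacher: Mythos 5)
Your proposal is correct and follows essentially the same route as the paper: exhaustive generation of connected planar graphs of minimum degree at least $2$ up to order $13$ via \texttt{geng} and \texttt{planarg}, followed by verification with the backtracking algorithm, relying on the pendant-edge reduction already given in the text. The trivial filter $\chi_i(G)\le |V(G)|$ and the early-termination mode are harmless optimisations that do not change the argument.
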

For the computation of order $13$, the generation took approximately $1.61$ CPU years, while our injective chromatic number computation took only $15$ CPU hours.

In Table~\ref{tab:sharp_for_LS}, we give counts of planar graphs with minimum degree $2$ attaining the upper bound of Conjecture~\ref{conj:luzar} for each order $n$ and maximum degree $\Delta$. The smallest most symmetric examples for each $\Delta$ in Table~\ref{tab:sharp_for_LS} can among others be inspected on the House of Graphs~\cite{CDG23} by searching for the keywords ``injective chromatic number''. See Appendix~\ref{app:hog} for their specific URLs. 

\begin{table}[!htb]
    \centering
    \begin{tabular}{c|*{10}{c}}
        \backslashbox{$n$}{$\Delta$} & 3 & 4 & 5 & 6 & 7 & 8 & 9 & 10 & 11 & 12   \\
        \hline
        3 -- 8 & 0&0&0&0&0&0&0&0&0&0\\
        9 & 1&1&0&0&0&0&0&0&0&0\\
        10 &3&0&1&0&0&0&0&0&0&0\\
        11 &2&0&2&36&0&0&0&0&0&0\\
        12 &1&0&8&326&131&0&0&0&0&0\\
        13 &11&0&30&3371&2600&887&0&0&0&0\\
    \end{tabular}
    \caption{The amount of connected planar graphs with minimum degree $2$ attaining the upper bound of Conjecture~\ref{conj:luzar} for each order $n$ and maximum degree $\Delta$.}
    \label{tab:sharp_for_LS}
\end{table}

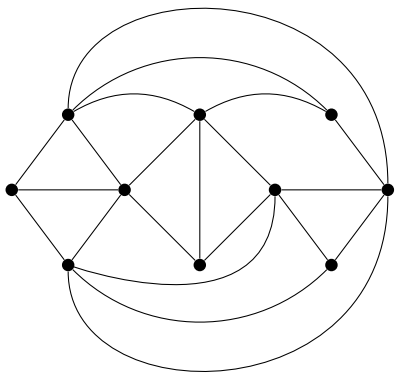
\begin{figure}[!htb]
    \centering
    \begin{tikzpicture}[vertex/.style={circle,fill, scale = 0.5}, scale = 1]
        \node[vertex] (1) at (-2.5,0) {};
        \node[vertex] (2) at (-1,0) {};
        \node[vertex] (3) at (1,0) {};
        \node[vertex] (4) at (2.5,0) {};
        
        \node[vertex] (5) at (-1.75,1) {};
        \node[vertex] (6) at (0,1) {};
        \node[vertex] (7) at (1.75,1) {};

        \node[vertex] (8) at (-1.75,-1) {};
        \node[vertex] (9) at (0,-1) {};
        \node[vertex] (10) at (1.75,-1) {};

        \draw (2) -- (8) -- (1) -- (2) -- (5) -- (1);
        \draw (2) -- (6) -- (3) -- (9) -- (2);
        \draw (6) -- (9);
        \draw (4) -- (3) -- (10) -- (4) -- (7);

        \draw (7) to[bend right] (6);
        \draw (6) to[bend right] (5);
        \draw (5) to[bend left, out = 45, in=135] (7);
        \draw (5) .. controls +(0,2) and +(0,3) .. (4);
        \draw (8) to[bend right, out = -45, in = -135] (10);
        \draw (8.center) .. controls +(0,0) and +(0,-2) .. (3);
        \draw (8) .. controls +(0,-2) and +(0,-3) .. (4);
        
    \end{tikzpicture}
    \caption{Planar graph of order $10$ with maximum degree $5$, diameter $2$ and injective chromatic number $10$.}
    \label{fig:LS_correction}
\end{figure}

While analysing the results from Table~\ref{tab:sharp_for_LS} we noticed that the single graph of order $10$ with maximum degree $5$ and injective chromatic number $10$ we found is different from the one provided by Lu\v{z}ar and \v{S}krekovski in Figure~3 of~\cite{luvzar2015counterexamples} as a counterexample to Conjecture~\ref{conj:chen} for $\Delta = 5$. Theirs was not a graph of diameter $2$ and injective chromatic number $10$, as they claimed.
Moreover, it is also not a subgraph of a planar graph of diameter $2$, maximum degree $5$ and order $10$. However, it is still a counterexample as its injective chromatic number is 9. Using our algorithm, we correct their claim by giving a  planar graph of order 10 with diameter $2$ and maximum degree $5$ in which every edge is part of a triangle. This graph can be found in Figure~\ref{fig:LS_correction}. Hence, the results by Lu\v{z}ar and \v{S}krekovski~\cite{luvzar2015counterexamples} still hold. 

Interestingly enough, it seems that for $\Delta = 4$, we only found exactly one planar graph\footnote{This graph can be inspected on the House of Graphs~\cite{CDG23} at \url{https://houseofgraphs.org/graphs/33503}.}
$G$ of minimum degree $2$ for which $\chi_i(G) = 9$. See Figure~\ref{fig:D4_chi9}.
This raises the question whether other such graphs exist. As the graph is quartic it will be unsuitable for the operations we describe in Section~\ref{sec:families}.

\begin{figure}[!htb]
\centering
\begin{tikzpicture}[main_node/.style={circle,fill, scale = 0.5}, scale = 0.7]

\node[main_node] (0) at (3.1428571428571423, 0.8571428571428563) {};
\node[main_node] (1) at (-0.8571428571428563, -3.1428571428571423) {};
\node[main_node] (2) at (0.3843973995665939, -0.3610601766961157) {};
\node[main_node] (3) at (-0.8571428571428563, 4.857142857142858) {};
\node[main_node] (4) at (0.3843973995665939, 2.073012168694783) {};
\node[main_node] (5) at (-2.0986831138523083, -0.3610601766961157) {};
\node[main_node] (6) at (-4.857142857142858, 0.8571428571428563) {};
\node[main_node] (7) at (-0.8571428571428563, 0.8571428571428563) {};
\node[main_node] (8) at (-2.0986831138523083, 2.07534589098183) {};

 \path[draw]
(0) edge node {} (1) 
(0) edge node {} (2) 
(0) edge node {} (3) 
(0) edge node {} (4) 
(1) edge node {} (2) 
(1) edge node {} (5) 
(1) edge node {} (6) 
(2) edge node {} (5) 
(2) edge node {} (7) 
(3) edge node {} (4) 
(3) edge node {} (6) 
(3) edge node {} (8) 
(4) edge node {} (7) 
(4) edge node {} (8) 
(5) edge node {} (6) 
(5) edge node {} (7) 
(6) edge node {} (8) 
(7) edge node {} (8) 
;

\end{tikzpicture}
\caption{Planar graph with maximum degree $4$ and injective chromatic number $9$.}\label{fig:D4_chi9}
\end{figure}
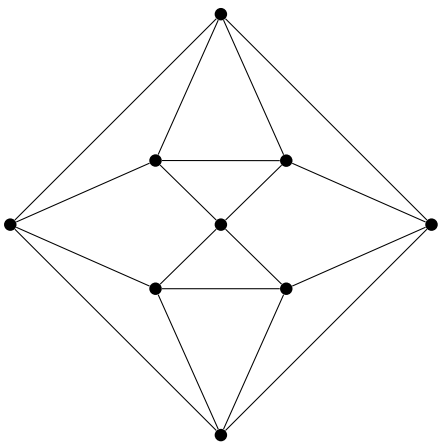


We can also generate planar graphs using \texttt{plantri}~\cite{brinkmann2007program}, which significantly outperforms \texttt{geng} and \texttt{planarg} when generating $k$-connected graphs for $k\in\{3,4\}$. We therefore used \texttt{plantri} to generate $3$-connected and $4$-connected planar graphs and obtained the following results.
\begin{proposition}
    A $3$-connected counterexample to Conjecture~\ref{conj:luzar} has order at least $17$.
    A $4$-connected counterexample to Conjecture~\ref{conj:luzar} has order at least $21$. 
\end{proposition}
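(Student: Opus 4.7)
The plan is to prove this proposition by exhaustive computation, in direct analogy with the preceding proposition for general planar graphs, but leveraging \texttt{plantri} in place of \texttt{geng}+\texttt{planarg} because it generates $3$- and $4$-connected planar graphs much more efficiently.

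First, I would generate, using \texttt{plantri}, the complete list of $3$-connected planar graphs on $n$ vertices for every $n$ up to $16$, and similarly the complete list of $4$-connected planar graphs on $n$ vertices for every $n$ up to $20$. Unlike the situation in the preceding proposition, no preprocessing to remove pendant edges is needed: $3$-connectedness already forces minimum degree at least $3$, so every graph produced by \texttt{plantri} in these ranges is automatically a valid candidate. It is also legitimate to restrict attention to connected (indeed $3$-connected) graphs, since a disconnected counterexample would, by taking a component of maximum degree $\Delta$, yield a connected counterexample of the same maximum degree.

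Next, for each generated graph $G$, I would run the backtracking algorithm described at the start of Section~\ref{sect:results} to compute $\chi_i(G)$ exactly, and compare it against the value prescribed by Conjecture~\ref{conj:luzar} for the maximum degree $\Delta(G)$. The proposition then follows once I verify that no graph in either list violates this bound. The correctness tests referenced in Appendix~\ref{app:correctness} ensure that the reported injective chromatic numbers can be trusted, so the proof amounts to reporting that the search completed exhaustively and produced no violating graph.

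The main obstacle I expect is sheer computational cost rather than any conceptual difficulty: the number of $3$-connected planar graphs grows quickly with $n$, and although \texttt{plantri} enumerates them in time linear in the output, computing $\chi_i$ on every one of them at $n=16$ is the expensive step. The saturation-based vertex ordering and the bounding rule (prune whenever the number of colours in the partial assignment matches the best colouring found so far) in the algorithm of Section~\ref{sect:results} are therefore essential for feasibility; the $4$-connected case, although running to a larger order $n=20$, is actually cheaper because $4$-connected planar graphs are much scarcer. A secondary point is simply to confirm that the stated lower bounds are tight, i.e.\ that $3$-connected, respectively $4$-connected, counterexamples of order exactly $17$ and $21$ do exist; these will be exhibited explicitly in Section~\ref{sec:families} as members of the infinite families constructed there, which closes the gap from both sides.
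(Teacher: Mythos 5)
Your approach is exactly the paper's: exhaustively generate all $3$-connected planar graphs up to order $16$ and all $4$-connected planar graphs up to order $20$ with \texttt{plantri}, compute $\chi_i$ for each with the backtracking algorithm of Section~\ref{sect:results}, and observe that no graph exceeds the bound of Conjecture~\ref{conj:luzar}; the observations about minimum degree and connectivity are fine. One correction to your final paragraph: the proposition asserts only a lower bound on the order of any counterexample, not that counterexamples of order $17$ or $21$ exist, so no tightness step is needed --- and indeed the families of Section~\ref{sec:families} \emph{attain} the bound of Conjecture~\ref{conj:luzar} rather than exceed it, so they are sharp examples for that conjecture (and counterexamples only to Conjecture~\ref{conj:chen}), not counterexamples to Conjecture~\ref{conj:luzar}, of which none are known.
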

The computation for $3$-connected planar graphs of order $16$ took approximately 131 CPU hours for the generation and 2109 CPU hours for computing the injective chromatic number of the generated graphs. For $4$-connected planar graphs of order $20$, the generation took approximately 70 CPU hours while determining the injective chromatic number took 2817 hours.

In the hope of finding examples similar to that of Figure~\ref{fig:D4_chi9}, we can also use \texttt{plantri}~\cite{brinkmann2007program} to efficiently generate $3$-connected quartic planar graphs. We find the following. 
\begin{proposition}
    The only $3$-connected quartic planar graph with injective chromatic number $9$ and order $n\leq 28$ is the one from Figure~\ref{fig:D4_chi9}. 
\end{proposition}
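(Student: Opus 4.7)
The proof is computational, mirroring the strategy used for the preceding propositions in this section, but specialised to quartic planar graphs. The plan is to exhaustively generate all $3$-connected quartic planar graphs of each order $n$ with $9 \le n \le 28$, compute their injective chromatic numbers with our backtracking algorithm from~\cite{DGR24Program}, and verify that exactly one such graph attains $\chi_i = 9$, namely the graph of Figure~\ref{fig:D4_chi9}.

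First, I would use \texttt{plantri}~\cite{brinkmann2007program} with the flags selecting $3$-connected quartic (i.e.\ $4$-regular) plane triangulations of the dual, or equivalently $3$-connected $4$-regular planar graphs, for each order from $9$ up to $28$. Since we want quartic graphs, $n$ may be any integer $\ge 6$ admitting such graphs (the number of edges $2n$ must be an integer, which it always is for $4$-regular graphs on $n$ vertices, but Euler's formula forces $n \ge 6$). Next, for each generated graph $G$, I would invoke the backtracking algorithm described in Section~\ref{sect:results} to compute $\chi_i(G)$. Since by Conjecture~\ref{conj:luzar} we expect $\chi_i(G) \le 9$ for every planar $G$ with $\Delta(G) = 4$, and this conjecture has already been verified computationally for the relevant orders (or can be re-verified here), the only outputs of interest are those with $\chi_i(G) = 9$.

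Finally, I would collect all graphs for which the algorithm returns $9$, and check that the list consists of a single graph at order $9$, isomorphic to the graph of Figure~\ref{fig:D4_chi9}. Isomorphism can be tested by comparing canonical forms using \texttt{nauty}~\cite{Mc14}.

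The main obstacle is computational cost rather than conceptual difficulty: the number of $3$-connected quartic planar graphs grows rapidly with $n$, and for each graph the backtracking for $\chi_i$ is the dominant expense since the search may need to rule out an $8$-colouring. Two mitigations help. First, \texttt{plantri} generates $3$-connected quartic planar graphs far more efficiently than filtering \texttt{geng}/\texttt{planarg} output, as noted above for $k$-connected generation. Second, the backtracking can be terminated as soon as an injective $8$-colouring is found, so only the (presumably rare) $\chi_i = 9$ candidates incur the full exhaustive cost. Reporting total CPU times for the generation and colouring phases, as was done for the previous propositions, completes the evidence.
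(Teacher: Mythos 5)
Your proposal matches the paper's approach exactly: the paper establishes this proposition by exhaustive computation, generating all $3$-connected quartic planar graphs up to order $28$ with \texttt{plantri} and computing their injective chromatic numbers with the backtracking algorithm (about 2 CPU hours for generation and 3024 hours for the colouring phase). The only minor inaccuracy is your parenthetical identification of $3$-connected quartic planar graphs with duals of plane \emph{triangulations} (their duals are quadrangulations), but this does not affect the validity of the method.
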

The computation for order $28$ took approximately 2 CPU hours for the generation and 3024 hours for determining the injective chromatic number.

Recently, Jin and Xu~\cite{JX20} showed that a cubic counterexample of minimum order to Conjecture~\ref{conj:chen} and~\ref{conj:luzar} -- if it exists -- will be $3$-connected.
We therefore also generated $3$-connected planar cubic graphs using the generator \texttt{plantri}~\cite{brinkmann2007program}, determined their injective chromatic number using our algorithm and obtained the following result. Note that in this case we use \texttt{plantri} as it is a lot faster than \texttt{geng} and \texttt{planarg} at generating $3$-connected (cubic) planar graphs.
\begin{proposition}
    A cubic counterexample to Conjecture~\ref{conj:luzar} has order at least $38$. 
\end{proposition}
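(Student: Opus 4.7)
The plan is purely computational, mirroring the approach used for the earlier propositions in this section. For cubic graphs one has $\lfloor 3\Delta/2\rfloor+1 = 5 = \lceil 3\Delta/2\rceil$, so Conjecture~\ref{conj:luzar} and Conjecture~\ref{conj:chen} coincide when $\Delta=3$, and a cubic counterexample is precisely a planar cubic graph with $\chi_i(G)\ge 6$. Since every cubic graph has an even number of vertices, showing that no counterexample has order at most $37$ amounts to ruling out order $36$ (the odd order $37$ is vacuous), given the earlier bound that any such counterexample must have at least $36$ vertices.

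First, I would invoke the result of Jin and Xu~\cite{JX20} mentioned just above the statement: a minimum-order cubic counterexample to Conjecture~\ref{conj:chen} (equivalently Conjecture~\ref{conj:luzar}) must be $3$-connected. This reduces the task to checking only $3$-connected planar cubic graphs of order $36$. Next, I would use \texttt{plantri}~\cite{brinkmann2007program} with the appropriate options to exhaustively generate exactly this class, exploiting that \texttt{plantri} is orders of magnitude faster than \texttt{geng}/\texttt{planarg} for $3$-connected cubic planar graphs and outputs one representative per isomorphism class.

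Then, for every generated graph, I would run the backtracking algorithm described at the beginning of this section to compute $\chi_i(G)$, pruning as soon as a colouring with $5$ colours is found (since any such colouring certifies $\chi_i(G)\le 5$ and rules the graph out as a counterexample). The output is a simple counter of graphs, if any, for which the search exhausts without finding a $5$-colouring; the claim is that this counter is zero at order~$36$. Correctness of the pipeline is already supported by the tests described in Appendix~\ref{app:correctness} and by the agreement with the earlier order-$13$ and $3$-connected order-$16$ computations.

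The main obstacle is entirely one of computing resources rather than mathematical difficulty: the number of $3$-connected planar cubic graphs on $36$ vertices is very large, and computing the injective chromatic number of each is NP-hard in general, so the total wall time will likely dominate everything done earlier in this section. I would report the CPU time spent on the generation phase and on the colouring phase separately, in the same format as the preceding propositions, and briefly note that the even-order restriction on cubic graphs together with the Jin--Xu reduction are what make an order-$36$ computation sufficient to conclude the bound $38$.
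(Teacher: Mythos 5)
Your proposal is correct and follows essentially the same route as the paper: the authors likewise invoke the Jin--Xu reduction to $3$-connected cubic planar graphs, generate them with \texttt{plantri} through order $36$, compute the injective chromatic number with their backtracking algorithm, and conclude the bound $38$ from the even-order parity of cubic graphs. The only minor caveat is that the ``earlier bound of $36$'' you cite from the introduction is itself a preview of this very computation rather than an independent result, so in practice one checks all even orders up to $36$ rather than order $36$ alone.
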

The computation on order $36$ took approximately 50 hours for the generation and 2275 hours for determining the injective chromatic number of the generated graphs.


\section{Infinite families of \texorpdfstring{$\bm3$}{3}-connected planar graphs for each maximum degree}\label{sec:families}
Lu\v{z}ar and \v{S}krekovski~\cite{luvzar2015counterexamples} provided only a finite amount of counterexamples to Conjecture~\ref{conj:chen} for every maximum degree $\Delta$.
Their counterexamples were all graphs with diameter $2$ and one can show that for a fixed maximum degree $\Delta$, the order of these graphs is bounded by $1+\Delta^2$. Hence, it is natural to wonder whether these are the only counterexamples to Conjecture~\ref{conj:chen} or if we can save the conjecture by restricting the conjecture to graphs with diameter at least $3$, only ruling out a finite number of graphs for each maximum degree $\Delta$.

It turns out that even when considering only graphs of diameter at least $3$, Conjecture~\ref{conj:chen} is false. In this section we present several infinite families of $3$-connected planar graphs for each maximum degree $\Delta$ with a certain injective chromatic number. Hence, these families contain graphs of diameter at least $3$.
Note that such families of connectivity $1$ and connectivity $2$ are often easily obtainable by attaching pendant edges or subdividing paths in the correct way.

We first present an infinite family of counterexamples to Conjecture~\ref{conj:chen} when $\Delta = 4$.




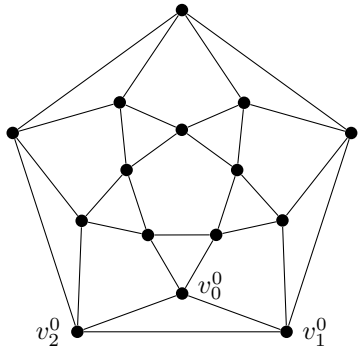
\begin{figure}[!htb]
\centering
\begin{tikzpicture}[main_node/.style={circle,draw,minimum size=1em,inner sep=3pt, scale = 0.7]}, scale = 0.045]

\node [circle,fill,scale=0.5] (15) at (68.343085,70.154872) {};
\node [circle,fill,scale=0.5, label={[yshift=3pt]right:$v_0^0$}] (14) at (50.057916,13.760642) {};
\node [circle,fill,scale=0.5] (13) at (39.952692,31.056590) {};
\node [circle,fill,scale=0.5] (12) at (50.005848,97.563304) {};
\node [circle,fill,scale=0.5] (11) at (49.956039,62.160366) {};
\node [circle,fill,scale=0.5, label=left:$v_2^0$] (10) at (19.100205,2.439906) {};
\node [circle,fill,scale=0.5] (9) at (31.621097,70.269455) {};
\node [circle,fill,scale=0.5] (8) at (33.644443,50.281644) {};
\node [circle,fill,scale=0.5] (7) at (0.000000,61.231928) {};
\node [circle,fill,scale=0.5] (6) at (20.372455,35.246865) {};
\node [circle,fill,scale=0.5] (5) at (60.107320,31.099592) {};
\node [circle,fill,scale=0.5] (4) at (66.313844,50.264391) {};
\node [circle,fill,scale=0.5] (3) at (99.999999,61.217047) {};
\node [circle,fill,scale=0.5, label=right:$v_1^0$] (2) at (80.904481,2.436695) {};
\node [circle,fill,scale=0.5] (1) at (79.681727,35.366557) {};
\draw [black] (15) to (4);
\draw [black] (15) to (11);
\draw [black] (15) to (12);
\draw [black] (15) to (3);
\draw [black] (14) to (2);
\draw [black] (14) to (10);
\draw [black] (14) to (13);
\draw [black] (14) to (5);
\draw [black] (13) to (6);
\draw [black] (13) to (8);
\draw [black] (13) to (5);
\draw [black] (12) to (3);
\draw [black] (12) to (9);
\draw [black] (12) to (7);
\draw [black] (11) to (4);
\draw [black] (11) to (8);
\draw [black] (11) to (9);
\draw [black] (10) to (2);
\draw [black] (10) to (7);
\draw [black] (10) to (6);
\draw [black] (9) to (8);
\draw [black] (9) to (7);
\draw [black] (8) to (6);
\draw [black] (7) to (6);
\draw [black] (5) to (1);
\draw [black] (5) to (4);
\draw [black] (4) to (1);
\draw [black] (3) to (1);
\draw [black] (3) to (2);
\draw [black] (2) to (1);

\end{tikzpicture}
\caption{Graph $G^4_0$ with $\Delta = 4$ and $\chi_i(G_0) = 8$.}
\label{fig:deg_4_inj_chr_num_8}
\end{figure}

We inductively define the following family $\mathcal{G}$. Let $G^4_0$ be the graph\footnote{This graph can be inspected on the House of Graphs~\cite{CDG23} at \url{https://houseofgraphs.org/graphs/50484}.}
shown in Figure~\ref{fig:deg_4_inj_chr_num_8}
and let $v_0^0v_1^0v_2^0$ be the vertices of a triangle adjacent with the outer face. Given $G^4_{i-1}$ with $i > 0$, we define $G^4_i$ to be the graph with vertex set $V(G^4_{i-1})\cup \{v_0^i,v_1^i,v_2^i\}$ and edge set $E\cup E(G^4_{i-1})\setminus \{v_0^{i-1}v_1^{i-1},v_1^{i-1}v_2^{i-1},v_2^{i-1}v_0^{i-1}\}$, where
\[E := \{v_0^{i}v_1^{i},v_1^{i}v_2^{i},v_2^{i}v_0^{i}\}\cup\{ v_0^iv_1^{i-1}, v_0^iv_2^{i-1}, v_1^iv_0^{i-1}, v_1^iv_2^{i-1}, v_2^iv_0^{i-1}, v_2^iv_1^{i-1}\}\]
Then $\mathcal{G}:=\{G^4_i\mid i \ge 0\}$. It is straightforward to see that every graph in $\mathcal{G}$ is planar, has maximum degree~$4$ and one can show using Menger's theorem that they are all $3$-connected.


\begin{theorem}
For every graph $G \in \mathcal{G}$, $\chi_i(G) = 8$.
\label{theor:pentagonFamily}
\end{theorem}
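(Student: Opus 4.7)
The plan is to show $\chi_i(G) \leq 8$ and $\chi_i(G) \geq 8$ separately for every $G = G^4_i \in \mathcal{G}$, leveraging the layered structure of the family.

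For the upper bound, the plan is to exhibit an explicit injective $8$-colouring. First fix an injective $8$-colouring $c_0$ of $G^4_0$ (by explicit assignment on its $15$ vertices) and record the structural observation that, for any two vertices $u,v \in V(G^4_0) \setminus \{v_0^0, v_1^0, v_2^0\}$, the set of common neighbours of $u,v$ is the same in $G^4_i$ as in $G^4_0$: the inductive construction only modifies the neighbourhoods of the three outer-triangle vertices, while every non-outer vertex keeps its $G^4_0$-neighbourhood. Consequently $c_0$ restricted to $V(G^4_0) \setminus \{v_0^0, v_1^0, v_2^0\}$ is still an injective partial colouring when viewed inside $G^4_i$. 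It remains to colour the ``tube'' $v^0, v^1, \ldots, v^i$, which I would do periodically using two disjoint $3$-element palettes $A,B \subseteq \{1, \ldots, 8\}$ chosen so that $v^0$ receives the same colours as under $c_0$. The only constraints inside the tube are: (i)~each triangle $\{v_0^j, v_1^j, v_2^j\}$ uses three distinct colours, (ii)~$v^j \cup v^{j+2}$ uses six distinct colours (sharing common neighbours through $v^{j+1}$), and (iii)~at the top, $v^{i-1} \cup v^i$ uses six distinct colours, since $v^i$ keeps its triangle edges. As consecutive layers $v^j, v^{j+1}$ strictly below the top share no common neighbours, the pattern $A,A,B,B,A,A,\ldots$ read from $v^0$ upwards (with the sets flipped at the top pair) satisfies every constraint.

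For the lower bound, the plan is induction on $i$ with base case $\chi_i(G^4_0) = 8$ verified either by the algorithm of Section~\ref{sect:results} or by short direct case analysis. For the induction step, from an injective $k$-colouring $c$ of $G^4_i$ I would build an injective $k$-colouring $c'$ of $G^4_{i-1}$ by the relabelling $c'(v_j^{i-1}) := c(v_j^i)$ for $j \in \{0,1,2\}$ and $c'(u) := c(u)$ otherwise. Distinctness of pairs not involving $v^{i-1}$ follows from the structural observation above. Constraints imposed in $G^4_{i-1}$ by the triangle $v^{i-1}$ --- for instance that $v_j^{i-1}$ and $v_k^{i-1}$ receive different colours, or that $v_j^{i-1}$ and the neighbours of other triangle vertices $v_k^{i-1}$ receive different colours --- are inherited from the analogous distance-$2$ relations in the octahedral structure $v^{i-1} \cup v^i$ of $G^4_i$: the role of the removed triangle vertex $v_k^{i-1}$ as common neighbour is taken over by $v_k^i$.

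The main obstacle I expect is the boundary case $i = 1$, in which $v^{i-1} = v^0$ still carries the $w$-type non-triangle neighbours inherited from $G^4_0$. In this case, constraints such as ``$v_j^0$ and $w_j^\ell$ share $w_j^{\ell'}$ as a common neighbour in $G^4_0$'' do not translate immediately through the octahedral structure between $v^0$ and $v^1$, since no $w_j^\ell$ is adjacent to any vertex of $v^1$. Here I plan to argue directly: the constraint graph of $G^4_0$ restricted to $V(G^4_0)\setminus\{v_0^0, v_1^0, v_2^0\}$ --- together with the common-neighbour constraints still mediated by the new triangle $v^1$ inside $G^4_1$ --- already requires $8$ colours, which should follow from a short computational or case analysis on the fixed graph $G^4_0$. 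Once the $i=1$ case is settled, the relabelling argument yields $\chi_i(G^4_i) \geq \chi_i(G^4_{i-1}) \geq \ldots \geq \chi_i(G^4_1) \geq 8$ for every $i$, completing the induction.
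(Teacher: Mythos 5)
Your structural analysis of the ``tube'' constraints is essentially right (each layer rainbow; layers at distance two colour-disjoint; the top two layers jointly rainbow), but both halves of your argument have genuine gaps. For the upper bound, the periodic pattern $A,A,B,B,A,A,\ldots$ read from $v^0$ fails for odd $i$: there the top pair $v^{i-1},v^i$ falls inside a single block and receives the same palette, and your proposed fix of flipping the palette of $v^i$ creates a clash with $v^{i-2}$, which lies at distance two from $v^i$ and already carries the other palette. With only $8$ colours there is no third disjoint triple available, so the flip cannot be repaired locally. The paper avoids this by running two separate inductions of step size two: for even $i$ the pattern is effectively your $A,A,B,B,\ldots$, but for odd $i$ the correct pattern is $A,B,B,A,A,B,B,\ldots$ (a single block at the bottom), and this in turn requires verifying on the base graph that $G^4_1$ admits an injective $8$-colouring in which all six vertices of $v^0\cup v^1$ get distinct colours --- a fact about the concrete graph that your proposal never establishes.

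The lower bound is more seriously broken. Your relabelling $c'(v_j^{i-1}):=c(v_j^i)$ only transports constraints mediated by the \emph{upper} neighbourhood of $v_j^{i-1}$. But in $G^4_{i-1}$ the vertex $v_j^{i-1}$ is adjacent to $v_k^{i-2}$ for $k\neq j$, and hence shares the common neighbour $v_k^{i-2}$ with every vertex of layer $i-3$ adjacent to $v_k^{i-2}$ (for $i=2$, with the $G^4_0$-neighbours of $v^0$). The image $v_j^i$ has no common neighbour with those vertices in $G^4_i$, since its lower neighbourhood is $\{v_l^{i-1}: l\neq j\}$ rather than $\{v_l^{i-2}: l\neq j\}$; so nothing forces $c(v_j^i)\neq c(v_m^{i-3})$, and the relabelled colouring can violate a genuine constraint of $G^4_{i-1}$. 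This is not confined to the boundary case $i=1$ that you flag --- it occurs at every level $i\geq 2$. The paper sidesteps the descent entirely: it observes that $G^4_1-v_0^1v_1^1-v_0^1v_2^1-v_1^1v_2^1$ is a subgraph of every member of $\mathcal{G}$ except $G^4_0$, proves (by computer check, with a direct case analysis in Appendix~\ref{app:two_graphs_inj_at_least_7}) that this fixed graph and $G^4_0$ both have injective chromatic number at least $8$, and concludes by subgraph-monotonicity of $\chi_i$. If you want to keep your top-down reduction you would need a different recolouring of the top layers, but the fixed-subgraph argument is both shorter and avoids the issue altogether.
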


\begin{proof}
    
    It is straightforward to verify that $\chi_i(G^4_0) \leq 8$ and that any injective $8$-colouring uses three different colours for $v_0^0, v_1^0$ and $v_2^0$. Let $i\geq 2$ even and suppose that $G^4_{i-2}$ has an injective $8$-colouring $c_{i-2}$ in which $v^{i-2}_0, v^{i-2}_1, v^{i-2}_2$ respectively have colours $0,1,2$. We define the following $8$-colouring $c_i$ on $G^4_{i}$. If $v\in V(G^4_i)$ is a vertex of $G^4_{i-2}$ let $c_i(v) := c_{i-2}(v)$. Let $c_i(v^{i-1}_j) := c_{i-2}(v^{i-2}_j)$ for $j\in\{0,1,2\}$ and let $v^{i}_0, v^i_1, v^i_2$ have three different colours pairwise disjoint from $\{0,1,2\}$. 

    We show that $c_i$ is an injective colouring. Indeed, $v^{i}_0$ only has $v^{i}_1, v^i_2$ and vertices with colours $0,1$ or $2$ at distance $2$. A similar reasoning holds for $v^i_1$ and $v^i_2$. Vertex $v^{i-1}_0$ only has $v^i_0, v^i_1, v^i_2$ and the neighbours of $v^{i-2}_1$ and $v^{i-2}_2$ at distance at most $2$. By definition $v^{i-1}_1$ and $v^{i-1}_2$ have different colours than $v^{i-1}_0$.
    So without loss of generality assume that a $v\in N_{G^4_i}(v^{i-2}_1)\setminus \{v^{i-1}_0, v^{i-1}_2\}$ has the same colour as $v^{i-1}_0$. Then $c_{i-2}(v) = c_{i}(v) = c_i(v^{i-1}_0) = c_{i-2}(v^{i-2}_0)$. This is a contradiction, since $v$ and $v^{i-2}_0$ have a path of length $2$ in $G^4_{i-2}$. A similar reasoning holds for $v^{i-1}_1$ and $v^{i-2}_2$. If $c_i$ were not injective for another pair of vertices, then the same holds for $c_{i-2}$, which shows that $c_i$ is an injective $8$-colouring.

    Again, one can verify that $\chi_i(G^4_1)\leq 8$ and that any injective $8$-colouring will use six different colours for $v^0_0,v^0_1,v^0_2,v^1_0,v^1_1$ and $v^1_2$.
    Let $i\geq 3$ odd and suppose that $G^4_{i-2}$ has an injective $8$-colouring $c_{i-2}$ in which $v^{i-3}_0,v^{i-3}_1,v^{i-3}_2,v^{i-2}_0,v^{i-3}_1, v^{i-3}_2$ have different colours.
    We define an $8$-colouring $c_{i}$ of $G^4_i$. If $v\in V(G^4_i)$ is a vertex of $G^4_{i-2}$ let $c_i(v) := c_{i-2}(v)$. Let $c_i(v^{i-1}_j) := c_{i-2}(v^{i-2}_j)$ and $c_i(v^{i}_j) := c_{i-2}(v^{i-3}_j)$ for $j\in\{0,1,2\}$.

    We show that $c_i$ is an injective colouring. By definition, $v^i_{0}, v^{i}_1, v^i_2$ do not share a colour with any vertex at distance at most $2$. Vertex $v^{i-1}_0$ only has paths of length $2$ to $v^{k}_j$ with $j\in\{0,1,2\}$ and $k\in\{i-3,i-2,i-1,i\}$ except when $v^k_j = v^{i-1}_0$ and $v^k_j = v^{i-2}_0$. We have that $c_i(v^k_j) = c_i(v^{i-1}_0)$ when $v^k_j = v^{i-1}_0$ and $v^k_j = v^{i-2}_0$. A similar reasoning holds for $v^{i-1}_1$ and $v^{i-1}_1$. If $c_i$ were not injective for another pair of vertices, then the same holds for $c_{i-2}$, which shows that $c_i$ is an injective $8$-colouring.

    Via computer verification it is easy to see that $\chi_i(G^4_0) >7$ and $\chi_i(G^4_1 - v_0^1v_1^1 - v_0^1v_2^1 - v_1^1v_2^1) > 7$. A direct proof of this fact can be found in Appendix~\ref{app:two_graphs_inj_at_least_7}.
    Since $G^4_1 - v_0^1v_1^1 - v_0^1v_2^1 - v_1^1v_2^1$ is a subgraph of any $G\in \mathcal{G}$ except for $G^4_0$, we conclude that $\chi_i(G) = 8$ for all $G\in \mathcal{G}$.
\end{proof}


\begin{figure}[!htb]
    \centering
    \begin{tikzpicture}[vertex/.style={circle,fill,inner sep=3pt, scale=0.7}]
        \node[draw,minimum size=3cm,regular polygon,regular polygon sides=3] (a) {};

       \node[vertex, label=$v_0^{i-1}$] (v0) at (a.corner 1) {};
       \node[vertex, label=below:$v_1^{i-1}$] (v1) at (a.corner 2) {};
       \node[vertex, label=below:$v_2^{i-1}$] (v2) at (a.corner 3) {};
       \node[label=below:$G^\Delta_{i-1}$] at (0,-1) {};
    \end{tikzpicture}
    \begin{tikzpicture}[vertex/.style={circle,fill,inner sep=3pt, scale=0.7}]
        \node[draw,minimum size=3cm,regular polygon,regular polygon sides=3] (a) {};
       \node[vertex, label=$v_0^{i-1}$] (v0) at (a.corner 1) {};
       \node[vertex, label=below:$v_1^{i-1}$] (v1) at (a.corner 2) {};
       \node[vertex, label=below:$v_2^{i-1}$] (v2) at (a.corner 3) {};
        \node[vertex, label=left:$v_2^{i}$] (v2i) at ($(v0)!0.5!(v1)$) {};
        \draw (v2) to (v2i);
       \node[label=below:$G^{'\Delta}_{i}$] at (0,-1) {};
    \end{tikzpicture}
    \begin{tikzpicture}[vertex/.style={circle,fill,inner sep=3pt, scale=0.7}]
        \node[draw,minimum size=3cm,regular polygon,regular polygon sides=3] (a) {};
       \node[vertex, label=$v_0^{i-1}$] (v0) at (a.corner 1) {};
       \node[vertex, label=below:$v_1^{i-1}$] (v1) at (a.corner 2) {};
       \node[vertex, label=below:$v_2^{i-1}$] (v2) at (a.corner 3) {};
        \node[vertex, label=left:$v_2^{i}$] (v2i) at ($(v0)!0.33!(v1)$) {};
        \draw (v2) to (v2i);
        \node[vertex, label=left:$v_1^{i}$] (v1i) at ($(v0)!0.66!(v1)$) {};
        \draw (v2) to (v1i);
       \node[label=below:$G^{\Delta}_{i}$] at (0,-1) {};
    \end{tikzpicture} 
    \caption{Visualisation of how to construct $G^{'\Delta}_i$ and $G^\Delta_i$ from $G^\Delta_{i-1}$ for a given triangle $v_0^{i-1}v_1^{i-1}v_2^{i-1}$ and $i\geq 1, \Delta\geq 5$.}
    \label{fig:general_delta_construction}
\end{figure}
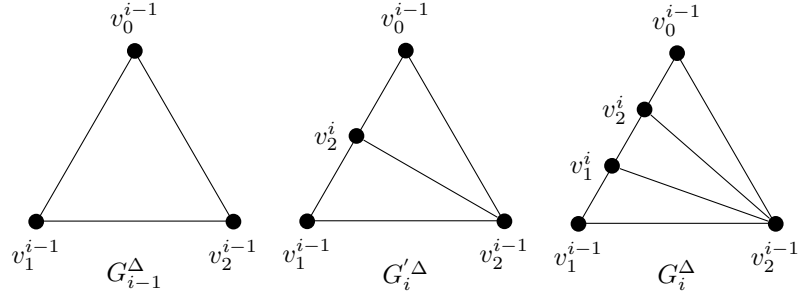

\noindent For $\Delta\geq 5$, we use the following procedure for constructing $3$-connected infinite families.

Let $G^\Delta_0$ be a graph and let $v^0_0v^0_1v^0_2$ be a triangle of $G^\Delta_0$. For $i>0$ we inductively define $G^{'\Delta}_i$ from $G^\Delta_{i-1}$ by subdividing $v^{i-1}_0v^{i-1}_1$ with vertex $v^i_2$ and adding edge $v^{i-1}_2v^i_2$ and we define $G^\Delta_i$ from $G^{'\Delta}_i$ by subdividing $v^i_2v^{i-1}_1$ with vertex $v^i_1$, adding edge $v^{i-1}_2v^i_1$ and we define $v^i_0:= v^{i-1}_2$. See Figure~\ref{fig:general_delta_construction}.
We define the infinite family $\mathcal{H}(G^\Delta_0; v^0_0v^0_1; v^0_2) := \{G^\Delta_0\}\cup\{G^{'\Delta}_i, G^\Delta_i\mid i > 0\}$. It is easy to see that if $G^\Delta_0$ is a plane graph such that $uvw$ is a facial triangle, every $G^\Delta_i$ and $G^{'\Delta}_i$ are planar.

\begin{lemma}\label{lemma:inf_family_procedure}
    Let $G$ be a $3$-connected planar graph of order at least $8$, maximum degree $\Delta \geq 5$ and diameter $2$ such that every edge is part of a triangle. Let $uv$ be an edge lying on a facial triangle $uvw$ such that the degree of $w$ is at most $\Delta - 2$ and at most $\lvert V(G)\rvert - 4$
    and $\lvert N[\{u,v,w\}]\rvert \leq \lvert V(G) \rvert - 2$. If $(u,w)$ is the only pair of vertices in $G$ in which there is a unique path of length $2$ containing $uv$ between them or if there are no such pairs,
    then $\mathcal{H}:=\mathcal{H}(G; uv; w)$ is an infinite family of $3$-connected planar graphs with maximum degree $\Delta$ and $\chi_i(G) = \chi_i(G)$ for all $G\in\mathcal{H}$. 
\end{lemma}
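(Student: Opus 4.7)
The plan is to prove three structural facts about each $G' \in \mathcal{H}$ (planarity, maximum degree $\Delta$, and $3$-connectivity) and then to establish $\chi_i(G') = \chi_i(G)$ in both directions. Planarity is immediate because each iteration subdivides an edge on a facial triangle and adds edges inside the bounded face. For the maximum degree, each step $G^{\Delta}_{i-1} \to G^{\Delta}_i$ preserves the degrees of all ``old'' vertices except for the apex $v_0^i = v_2^{i-1}$, whose degree increases by exactly $2$, while the new vertices $v_1^i, v_2^i$ have degree $3$. The apex is $w$ when $i = 1$ (of degree $\le \Delta - 2$ by hypothesis) and a previously added subdivision vertex of degree $3$ for $i \ge 2$, so the bound $\Delta \ge 5$ keeps its degree within $\Delta$. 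For $3$-connectivity I proceed by induction on $i$, using Menger's theorem together with the fact that each newly inserted vertex has $3$ neighbours connecting it to the previously $3$-connected structure.

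The key observation for the chromatic number is that since $G$ has diameter $2$ and every edge lies on a triangle, every pair of distinct vertices of $G$ shares a common neighbour (via the triangle on their edge when adjacent, via the diameter-$2$ condition otherwise), so $\chi_i(G) = n := |V(G)|$ and any optimal injective colouring $c$ of $G$ is a bijection $c \colon V(G) \to \{1, \ldots, n\}$.

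For the lower bound $\chi_i(G') \ge n$, I show that $c|_{V(G)}$ is an injective colouring of $G$ for any injective colouring $c$ of $G'$. Given $x, y \in V(G)$ sharing a common neighbour $z \in V(G) \setminus \{u, v\}$ in $G$, the edges $zx, zy$ survive in $G'$, so $z$ remains a common neighbour in $G'$. When all common neighbours of $x, y$ lie in $\{u, v\}$, the bad-pair hypothesis does the work: for pairs of the form $\{v, y\}$ with $y \in N_G(u) \setminus \{v\}$ (and symmetrically $\{u, y\}$ with $y \in N_G(v) \setminus \{u\}$) that are not $\{u, w\}$, the hypothesis supplies another common neighbour in $V(G) \setminus \{u, v\}$, which is preserved in $G'$; whereas the pair $\{u, w\}$ still shares the new vertex $v_2^1$ in $G'$ (since $v_2^1$ is adjacent to both $u$ and $w$ by construction), and $\{v, w\}$ shares $v_1^1$.

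For the upper bound $\chi_i(G') \le n$, I extend $c$ inductively. At iteration $1$, the forbidden colours for $v_1^1$ form a subset of $c(N_G(v) \cup N_G(w) \cup \{u\}) \subseteq c(N[\{u, v, w\}])$, of size at most $n - 2$ by hypothesis, so at least two colours remain; after colouring $v_1^1$, the forbidden set for $v_2^1$ is $c(N_G(u) \cup N_G(w)) \cup \{c(v_1^1)\}$, of size at most $n - 1$. At iteration $2$, the hypothesis $\deg_G(w) \le n - 4$ bounds the forbidden colours for $v_2^2$ by $\deg_G(w) + 3 \le n - 1$, and then $v_1^2$ is constrained by only six already-coloured vertices. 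For each subsequent iteration $i \ge 3$, a direct computation using $v_0^j = v_2^{j-1}$ shows that the distance-$2$ ball of each new vertex $v_1^{i+1}$ or $v_2^{i+1}$ in $G^{\Delta}_{i+1}$ contains at most seven already-coloured vertices, so the bound $n \ge 8$ leaves at least one available colour; the intermediate graphs $G^{\prime \Delta}_i$ are handled analogously. The main obstacle is the lower-bound direction, specifically the case analysis for common-neighbour relations destroyed when $uv$ is removed: the bad-pair hypothesis together with the new vertices $v_1^1$ and $v_2^1$ is engineered precisely so that every such relation is recovered in $G'$.
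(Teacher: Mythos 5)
Your proposal is correct and follows essentially the same route as the paper's proof: the same degree bookkeeping for the apex vertex $v_2^{i-1}$, Menger's theorem for $3$-connectivity, the observation that diameter $2$ plus every edge on a triangle forces $\chi_i(G)=|V(G)|$, the bad-pair hypothesis together with the new vertices $v_2^1, v_1^1$ to preserve all common-neighbour relations for the lower bound, and the bounds $|N[\{u,v,w\}]|\le n-2$, $\deg(w)\le n-4$ and $n\ge 8$ to extend the colouring for the upper bound. Your accounting at iteration $2$ (forbidden set of size $\deg_G(w)+3\le n-1$ for $v_2^2$, then only six constraints for $v_1^2$) is in fact slightly more careful than the paper's claim that $|N_1|\le n-2$.
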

\begin{proof}
    Let $G^\Delta_0:= G$, $v_0^0:= u$, $v_1^0:= v$ and $v_2^0 := w$. We will use the same notation as in the definition of the procedure.

    We first show that all graphs in the family have maximum degree $\Delta$.
    We have that the maximum degree of $G^\Delta_0$ is $\Delta$ and that the degree of $v_2^0$ is at most $\Delta - 2$. Suppose that $i > 0$ and that $v_2^{i-1}$ has degree at most $\Delta - 2$ in $G_{i-1}$, then in $G^{'\Delta}_i$, we have incremented the degree of $v_2^{i-1}$ by one, $v^{i}_2$ has degree $3$ and the degrees of the remaining vertices remains unchanged. In $G^\Delta_i$, we have again incremented the degree of $v_2^{i-1}$ by one, $v^{i}_1$ has degree $3$ and the degrees of the remaining vertices remains unchanged. Hence, $\Delta(G^{'\Delta}_i) = \Delta(G^\Delta_i) = \Delta$ and the degree of $v^i_2$ is $3$ which is at most $\Delta - 2$.

    Now we show that every graph in the family has the same chromatic injective number as $G^\Delta_0$. It is easy to see that $\chi_i(G^\Delta_0) = \lvert V(G^\Delta_0)\rvert$, since its diameter is $2$ and every edge is part of a triangle. By definition of $u,v$ and $w$, we see that any distinct pair $x,y\in V(G^\Delta_0)$ has a path of distance $2$ between them in $G^{'\Delta}_1$ and $G^\Delta_1-v_2^0v^1_1$. Therefore, we need at least $\lvert V(G^\Delta_0)\rvert$ colours to obtain an injective colouring of these graphs. Since $G^\Delta_1-v_2^0v^1_1$ is a subgraph of $G^\Delta_i$ and $G^{'\Delta}_j$ for $i > 1, j > 2$, we have that the injective chromatic number is at least $\lvert V(G^\Delta_0)\lvert$ for any graph in $\mathcal{H}$. 

    Now let $i\ge 0$ and $c_{i}$ be an injective $\lvert V(G^\Delta_0)\rvert$-colouring of $G^\Delta_{i}$ and let $N_i := N_{G^\Delta_{i}}[\{v^i_0, v^i_1, v^i_2\}]$. By our assumptions, if $i=0$, we have that $\lvert N_0\rvert\leq \lvert V(G^\Delta_0)\rvert - 2$. Since the degree of $v_2^0$ is at most $\lvert V(G^\Delta_0)\rvert - 4$ we also have that $\lvert N_1\rvert \leq \lvert V(G^\Delta_0)\rvert - 2$.  Arguing in the same way as in the start of this proof, we see that for $i > 2$, $\lvert N \rvert = 6 \leq \lvert G^\Delta_0\rvert - 2$. Now define $c'_{i+1}$ and $c_{i+1}$ to be colourings of $G^{'\Delta}_{i+1}$ and $G^\Delta_{i+1}$, respectively, such that $c'_{i+1}(v) = c_{i+1}(v) = c_i(v)$ if $v\in V(G^\Delta_i)$, $c'_{i+1}(v^{i+1}_2)$ gets a colour not assigned to any vertex of $N$ and $c_{i+1}(v^{i+1}_2)$ and $c_{i+1}(v^{i+1}_1)$ get distinct colours not assigned to any vertex of $N$. Then these are both injective $\lvert V(G^\Delta_0)\rvert$-colourings of $G^{'\Delta}_{i+1}$ and $G^\Delta_{i+1}$, respectively. Hence, the injective chromatic number is at most $\lvert V(G^\Delta_0)\rvert$ for any graph in $\mathcal{H}$.

    Finally, we show that the graphs are $3$-connected by induction using Menger's theorem. 
    For a graph $H$ which is $3$-connected there are three internally disjoint paths between any pair of vertices. For any triangle $uvw$ and a vertex $x$, we also have three internally disjoint paths from $x$ to $u, v$ and $w$. This follows from Menger's theorem applied to $H$ with an added vertex adjacent to $u$, $v$ and $w$. If we subdivide $uv$ by $w'$ and add an edge $ww'$, all these paths still exist in the new graph $H'$, but are subdivided point if they contain the edge $uv$.
    There exist three paths from $w'$ to $u$, to $v$ and to $w$, which are internally disjoint, so for any $x\in V(G)$ such that $x\neq u, v, w$ there exist three internally disjoint paths between $x$ and $w'$. It is easy to see three such paths also exists between $w$ and $w'$. Finally, let $x = u$ or $x = v$, then we have the paths $w'x$ and $w'wx$. Let $z\in V(G)$ with $z\neq u,v,w$, then there exists a path $P_1$ between $x$ and $z$ and a path $P_2$ between $z$ and $y$, where $y\in \{u,v\}\setminus\{x\}$, such that they are internally disjoint. Then $P_1P_2yw'$ is a path internally disjoint from $w'x$ and $w'wx$. Hence, this new graph is also $3$-connected by Menger's theorem. 
    Since all graphs are inductively obtained by applying a subdivision to the edge of a triangle and adding an edge, by induction it follows that all graphs in $\mathcal{H}$ are $3$-connected.
\end{proof}







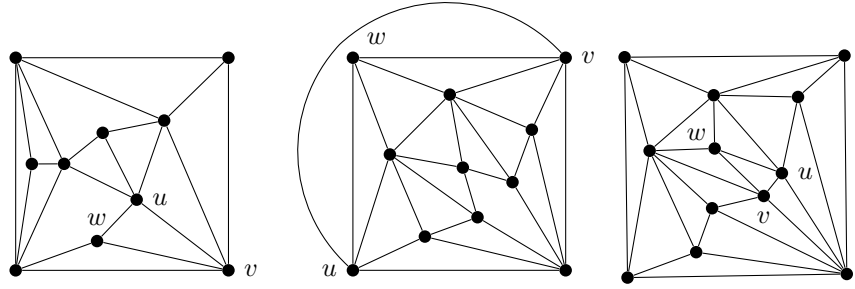
\begin{figure}
    \centering
    \begin{tikzpicture}[main_node/.style={circle,fill, scale = 0.5}, scale = 0.4, rotate=135]
\node[main_node] (0) at (5.142857142857142, -0.1442659904198358) {};
\node[main_node] (1) at (0.14285714285714235, -5.142857142857142) {};
\node[main_node] (2) at (0.14285714285714235, 4.857142857142858) {};
\node[main_node] (3) at (2.2651795780318906, 1.9746407438715128) {};
\node[main_node] (4) at (1.5098244483813836, 1.2138630600169065) {};
\node[main_node] (5) at (0.17667901433403088, -2.1561003099464635) {};
\node[main_node, label=right:$v$] (6) at (-4.857142857142858, -0.1442659904198358) {};
\node[main_node, label=above:$w$] (7) at (-1.0803672088903207, 2.2564102564102573) {};
\node[main_node, label=right:$u$] (8) at (-1.03808986954421, 0.35164835164835084) {};
\node[main_node] (9) at (1.332259623127717, -0.42321780783319163) {};

\path[draw]
(0) edge node {} (1) 
(0) edge node {} (2) 
(0) edge node {} (3) 
(0) edge node {} (4) 
(0) edge node {} (5) 
(1) edge node {} (5) 
(1) edge node {} (6) 
(2) edge node {} (3) 
(2) edge node {} (4) 
(2) edge node {} (6) 
(2) edge node {} (7) 
(3) edge node {} (4) 
(4) edge node {} (8) 
(4) edge node {} (9) 
(5) edge node {} (6) 
(5) edge node {} (8) 
(5) edge node {} (9) 
(6) edge node {} (7) 
(6) edge node {} (8) 
(7) edge node {} (8) 
(8) edge node {} (9) 
;

\end{tikzpicture}
        \begin{tikzpicture}[main_node/.style={circle,fill, scale = 0.5}, scale = 0.5, rotate=-45]

    \node[main_node] (0) at (3.1428571428571423, 0.7857142857142851) {};
    \node[main_node, label=left:$u$] (1) at (-0.8571428571428563, -3.2142857142857135) {};
    \node[main_node, label=right:$v$] (2) at (-0.8571428571428563, 4.785714285714286) {};
    \node[main_node] (3) at (-0.1383762436394016, 2.7952836637047165) {};
    \node[main_node] (4) at (0.4846965899597482, 1.4449381028328396) {};
    \node[main_node] (5) at (0.48682311840206527, 0.12649046859573243) {};
    \node[main_node] (6) at (-0.1383762436394016, -1.2259816207184624) {};
    \node[main_node] (7) at (-2.3414597098807626, -0.3392192602718911) {};
    \node[main_node, label = above right:$w$] (8) at (-4.857142857142858, 0.7857142857142851) {};
    \node[main_node] (9) at (-2.3414597098807626, 1.9127743601427816) {};
    \node[main_node] (10) at (-0.7231715652768278, 0.7857142857142851) {};

    \node[] (t) at (-5.357142857142858, 0.7857142857142851) {};

     \path[draw]
    (0) edge node {} (1) 
    (0) edge node {} (2) 
    (0) edge node {} (3) 
    (0) edge node {} (4) 
    (0) edge node {} (5) 
    (0) edge node {} (6) 
    (1) edge node {} (6) 
    (1) edge node {} (7) 
    (1) edge node {} (8) 
    (2) edge node {} (3) 
    (2) edge node {} (8) 
    (2) edge node {} (9) 
    (3) edge node {} (4) 
    (3) edge node {} (9) 
    (4) edge node {} (9) 
    (4) edge node {} (10) 
    (5) edge node {} (6) 
    (5) edge node {} (7) 
    (5) edge node {} (10) 
    (6) edge node {} (7) 
    (7) edge node {} (8) 
    (7) edge node {} (9) 
    (7) edge node {} (10) 
    (8) edge node {} (9) 
    (9) edge node {} (10) 
    ;

    \draw[] (1) to[bend left = 45] (t.center) to[bend left=45] (2);

    \end{tikzpicture}
        \begin{tikzpicture}[main_node/.style={circle,fill, scale = 0.5}, scale = 0.43, rotate=-45
]

    \node[main_node] (0) at (4.742857142857144, 0.12857142857142811) {};
    \node[main_node] (1) at (0.02857142857142847, -4.742857142857144) {};
    \node[main_node] (2) at (-0.08571428571428541, 4.857142857142858) {};
    \node[main_node] (3) at (-0.1961981566820281, 2.9357718894009217) {};
    \node[main_node, label=right:$u$] (4) at (1.119124423963134, 0.9158410138248838) {};
    \node[main_node, label=below:$v$] (5) at (1.221774193548387, 0.020391705069124022) {};
    \node[main_node] (6) at (0.36065668202765, -1.3771313364055295) {};
    \node[main_node] (7) at (0.9735599078341011, -2.6861755511727745) {};
    \node[main_node] (8) at (-2.2621543778801834, -1.4861751152073734) {};
    \node[main_node] (9) at (-4.857142857142858, 0.014285714285713347) {};
    \node[main_node] (10) at (-2.0767857142857142, 1.1307027649769583) {};
    \node[main_node, label={[label distance=-3pt]above left:$w$},] (11) at (-0.8917626728110601, -0.013824884792627223) {};
    
     \path[draw]
    (0) edge node {} (1) 
    (0) edge node {} (2) 
    (0) edge node {} (3) 
    (0) edge node {} (4) 
    (0) edge node {} (5) 
    (0) edge node {} (6) 
    (0) edge node {} (7) 
    (1) edge node {} (7) 
    (1) edge node {} (8) 
    (1) edge node {} (9) 
    (2) edge node {} (3) 
    (2) edge node {} (9) 
    (2) edge node {} (10) 
    (3) edge node {} (4) 
    (3) edge node {} (10) 
    (4) edge node {} (5) 
    (4) edge node {} (10) 
    (4) edge node {} (11) 
    (5) edge node {} (6) 
    (5) edge node {} (8) 
    (5) edge node {} (11) 
    (6) edge node {} (7) 
    (6) edge node {} (8) 
    (7) edge node {} (8) 
    (8) edge node {} (9) 
    (8) edge node {} (10) 
    (8) edge node {} (11) 
    (9) edge node {} (10) 
    (10) edge node {} (11) 
    ;
    
    \end{tikzpicture}
    \caption{Base graphs for the infinite families constructed in Theorem~\ref{thm:inf_families}, satisfying the requirements of Lemma~\ref{lemma:inf_family_procedure} for the triangle $uvw$. Their maximum degree is $5$, $6$ and $7$, respectively. Note that the left graph is the same graph as Figure~\ref{fig:LS_correction}.}
    \label{fig:base_graphs}
\end{figure}
\begin{theorem}\label{thm:inf_families}
    For every $\Delta > 4$, there is an infinite family of planar graphs with maximum degree $\Delta$ and injective chromatic number $\Delta + 5$, if $4\le \Delta \le 7$, or $\left\lfloor \frac{3}{2}\Delta\right\rfloor + 1$, if $\Delta \ge 8$.
\end{theorem}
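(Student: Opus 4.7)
The plan is to reduce the theorem to an application of Lemma~\ref{lemma:inf_family_procedure} for each value of $\Delta \geq 5$. That is, for every such $\Delta$, I aim to exhibit a base graph $G^\Delta_0$ that is a $3$-connected planar graph of diameter $2$, maximum degree $\Delta$, in which every edge lies in a triangle, and whose order equals the target value $\Delta+5$ (for $5\le\Delta\le 7$) or $\lfloor 3\Delta/2\rfloor + 1$ (for $\Delta\ge 8$). Because the diameter is $2$ and every edge is contained in a triangle, any two distinct vertices share a common neighbour (pairs at distance $2$ trivially, pairs at distance $1$ via the triangle containing their edge), so $\chi_i(G^\Delta_0) = |V(G^\Delta_0)|$, which is exactly the target value. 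The family produced by Lemma~\ref{lemma:inf_family_procedure} then inherits that injective chromatic number.

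For $\Delta = 5, 6, 7$, I would take the three graphs drawn in Figure~\ref{fig:base_graphs} as $G^\Delta_0$, with the indicated triangle $uvw$. Then the bulk of the argument is a routine verification for each of these three graphs: that they are planar and $3$-connected, have maximum degree equal to $\Delta$, diameter $2$ and every edge in a triangle; that $uvw$ is a facial triangle with $\deg(w)\le \Delta - 2$ and $\deg(w) \le |V|-4$; that $|N[\{u,v,w\}]| \le |V|-2$; and that at most one pair of vertices has a unique length-$2$ path through the edge $uv$, this pair being $(u,w)$ if it occurs. The diameter and triangle conditions can be checked by inspection of the figures (aided by the computational evidence from Section~\ref{sect:results}; indeed for $\Delta=5$ this is the graph of Figure~\ref{fig:LS_correction}), while the conditions on $w$ and on the length-$2$ paths through $uv$ are local and immediate from the drawings. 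Lemma~\ref{lemma:inf_family_procedure} then yields the desired infinite family.

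For $\Delta \ge 8$, I would invoke the construction of Lu\v zar and \v Skrekovski~\cite{luvzar2015counterexamples}: they supply, for every even $\Delta\ge 8$, a planar graph of diameter $2$, maximum degree $\Delta$, in which every edge is part of a triangle and whose order equals $\lfloor 3\Delta/2\rfloor+1$. I would take this graph (or a suitable modification of it) as $G^\Delta_0$ and then identify a facial triangle $uvw$ satisfying the hypotheses of Lemma~\ref{lemma:inf_family_procedure}. For odd $\Delta\ge 9$ one needs to exhibit a similar base graph; a natural approach is to start from the even case and adjust the degree of one well-chosen vertex by adding or deleting a small, carefully chosen set of edges while preserving diameter~$2$, $3$-connectivity, and the property that every edge is in a triangle. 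In both sub-cases one then needs to pick a facial triangle $uvw$ with $\deg(w)\le \Delta-2$, $\deg(w)\le |V|-4$, $|N[\{u,v,w\}]|\le|V|-2$ and with no ``bad'' pair other than $(u,w)$ realised only through $uv$; since the order grows linearly in $\Delta$ while the local degree conditions are fixed, this becomes easier as $\Delta$ grows.

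The main obstacle I anticipate is the $\Delta\ge 8$ case: verifying all the hypotheses of Lemma~\ref{lemma:inf_family_procedure} on an explicit triangle of a rather large base graph, especially the ``unique length-$2$ path through $uv$'' condition, which is the most delicate. The $3$-connectivity and maximum degree are straightforward from the construction, and the diameter-$2$/triangle properties come for free from the chosen base graph; by contrast, the uniqueness condition involves checking all vertex pairs and is the step most likely to force a small ad hoc adjustment of either the base graph or the chosen facial triangle $uvw$.
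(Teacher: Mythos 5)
You take essentially the same route as the paper: for $5\le\Delta\le 7$ verify the hypotheses of Lemma~\ref{lemma:inf_family_procedure} on the three base graphs of Figure~\ref{fig:base_graphs}, and for $\Delta\ge 8$ start from the Lu\v{z}ar--\v{S}krekovski diameter-$2$ graphs. The two points you leave open --- an explicit base graph for odd $\Delta\ge 9$ and the $3$-connectivity of the $\Delta\ge 8$ bases --- are settled in the paper exactly as you anticipate: both parities of their construction (Figure~\ref{fig:deg_+8}) are used, augmented by the three edges $a_kb_k$, $a_kc_{k+1}$, $b_kc_{k+1}$ (for even $\Delta$; with $c_{k+2}$ in place of $c_{k+1}$ for odd $\Delta$) to achieve $3$-connectivity, and the facial triangle is taken to be $u=c_{k+1}$ (resp.\ $c_{k+2}$), $v=b_k$, $w=a_k$.
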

\begin{proof}
    We prove this by finding a suitable starting graph for each $\Delta$ and applying Lemma~\ref{lemma:inf_family_procedure}.

    For $\Delta = 5$, let $G^5$ be the left graph\footnote{This graph can be inspected on the House of Graphs~\cite{CDG23} at \url{https://houseofgraphs.org/graphs/52994}.}
    of Figure~\ref{fig:base_graphs} and let $u$, $v$, $w$ be the vertices indicated in the picture. It is straightforward to check that the graph has diameter $2$, that $(u,w)$ is the only pair of vertices which has a unique path of length $2$ between them containing $uv$ and that the other requirements of the lemma are fulfilled, hence $\mathcal{H}(G^5; uv; w)$ is an infinite family of $3$-connected planar graphs with maximum degree $5$ and injective chromatic number $10$.

    
    For $\Delta = 6$, let $G^6$ be the middle graph\footnote{This graph can be inspected on the House of Graphs~\cite{CDG23} at \url{https://houseofgraphs.org/graphs/52998}.}
    of Figure~\ref{fig:base_graphs} and let $u$, $v$, $w$ be the vertices indicated in the picture. Again one can check that the graph has diameter $2$. No pairs have a unique path of length $2$ between them containing $uv$, since $G^6-uv$ is also of diameter $2$. The other requirements of the lemma are also easily verified, hence $\mathcal{H}(G^6;uv;w)$ is an infinite family of $3$-connected planar graphs with maximum degree $6$ and injective chromatic number $11$.

    For $\Delta = 7$, let $G^7$ be the right graph\footnote{This graph can be inspected on the House of Graphs~\cite{CDG23} at \url{https://houseofgraphs.org/graphs/52997}.}
    of Figure~\ref{fig:base_graphs} and let $u$, $v$, $w$ be the vertices indicated in the picture. Again one can check that the graph has diameter $2$. No pairs have a unique path of length $2$ between them containing $uv$, since $G^6 - uv$ is also of diameter 2. The other requirements of the lemma are also easily verified, hence $\mathcal{H}(G^7;uv;w)$ is an infinite family of $3$-connected planar graphs with maximum degree $7$ and injective chromatic number $12$.

    For $\Delta \geq 8$, we use the diameter $2$ graphs given by Lu\v zar and \v Skrekovski in~\cite{luvzar2015counterexamples}, but add three edges $a_kb_k, a_kc_{k+1}$ and $b_kc_{k+1}$ for even $\Delta$ and $a_kb_k, a_kc_{k+2}$ and $b_kc_{k+2}$ for odd $\Delta$ to make them $3$-connected. Their construction is shown in Figure~\ref{fig:deg_+8}.
    Using the labellings of the figure, if $\Delta$ is even, let $u = c_{k+1}, v = b_k, w = a_k$. If $\Delta$ is odd, let $u = c_{k+2}, v = b_k, w = a_k$. Denote these graphs by $G^\Delta$. By construction, the $G^\Delta$ have diameter $2$. No pairs of vertices have a unique path of length $2$ between them containing $uv$, since removing $uv$ also gives a graph of diameter $2$. The other requirements of the lemma are also easily verified. Therefore, $\mathcal{H}(G^\Delta; uv; w)$ is an infinite family of $3$-connected planar graphs with maximum degree $\Delta$ and injective chromatic number $\lfloor \frac{3}{2} \Delta \rfloor + 1$.
\end{proof}

\begin{figure}

\centering
    \begin{minipage}{0.45\textwidth}
        \centering
            \begin{tikzpicture}[scale=1.5]
        \coordinate (u) at (0,0);
        \coordinate (w) at (3,0);
        \coordinate (v) at (1.5,1.5);
        \coordinate[below left= 0.35cm and 0.35cm of u] (ug);
        \coordinate[below right= 0.35cm and 0.35cm of w] (wg);
        
        \coordinate (a1) at (0.55,0.95);
        \coordinate (a2) at (0.35,1.15);
        \coordinate (ak) at (0,1.50);
        
        \coordinate (b1) at (2.45,0.95);
        \coordinate (b2) at (2.65,1.15);
        \coordinate (bk) at (3,1.50);
        
        \coordinate (c1) at (1.5,0.4);
        \coordinate (c2) at (1.5,0.1);
        \coordinate (ck1) at (1.5,-0.5);
        
        \draw (u) -- (v) -- (w);
        \draw (u) -- (a1) -- (v);
        \draw (u) -- (a2) -- (v);
        \draw (u) -- (ak) -- (v);
        \draw (w) -- (b1) -- (v);
        \draw (w) -- (b2) -- (v);
        \draw (w) -- (bk) -- (v);
        \draw (u) -- (c1) -- (w);
        \draw (u) -- (c2) -- (w);
        \draw (u) -- (ck1) -- (w);
        \draw (a1) -- (a2);
        \draw (b1) -- (b2);
        \draw (c1) -- (c2);
        \draw(ak) to[bend left] (bk);
        \draw(ak) to[bend right, in=-135] (ug) to[bend right, out=-45] (ck1);
        \draw(bk) to[bend left, in=135] (wg) to[bend left, out=45] (ck1);

        \draw[dotted] (a2) -- (ak);
        \draw[dotted] (b2) -- (bk);
        \draw[dotted] (c2) -- (ck1);
        
        \foreach \point in {u,v,w,a1,a2,ak,b1,b2,bk,c1,c2,ck1} {
            \fill (\point) circle (1.5pt);
        }
    
        \node[below left] at (u) {$u$};
        \node[below right] at (w) {$w$};
        \node[above] at (v) {$v$};
        \node[above] at (a1) {$a_1$};
        \node[above] at (a2) {$a_2$};
        \node[above left] at (ak) {$a_k$};
        \node[above] at (b1) {$b_1$};
        \node[above] at (b2) {$b_2$};
        \node[above right] at (bk) {$b_k$};
        \node[above right] at (c1) {$c_1$};
        \node[above right] at (c2) {$c_2$};
        \node[below right] at (ck1) {$c_{k+1}$};
    
    \end{tikzpicture}
     
    \end{minipage}
    \hfill
    \begin{minipage}{0.45\textwidth}
        \centering

         \begin{tikzpicture}[scale=1.5]
        \coordinate (u) at (0,0);
        \coordinate (w) at (3,0);
        \coordinate (v) at (1.5,1.5);
        
        \coordinate[below left= 0.35cm and 0.35cm of u] (ug);
        \coordinate[below right= 0.35cm and 0.35cm of w] (wg);
        
        \coordinate (a1) at (0.55,0.95);
        \coordinate (a2) at (0.35,1.15);
        \coordinate (ak) at (0,1.50);
        
        \coordinate (b1) at (2.45,0.95);
        \coordinate (b2) at (2.65,1.15);
        \coordinate (bk) at (3,1.50);
        
        \coordinate (c1) at (1.5,0.4);
        \coordinate (c2) at (1.5,0.1);
        \coordinate (ck1) at (1.5,-0.5);
        \coordinate (ck2) at (1.5,-1);

        \draw (u) -- (v) -- (w);
        \draw (u) -- (a1) -- (v);
        \draw (u) -- (a2) -- (v);
        \draw (u) -- (ak) -- (v);
        \draw (w) -- (b1) -- (v);
        \draw (w) -- (b2) -- (v);
        \draw (w) -- (bk) -- (v);
        \draw (u) -- (c1) -- (w);
        \draw (u) -- (c2) -- (w);
        \draw (u) -- (ck1) -- (w);
        \draw (u) -- (ck2) -- (w);
        \draw (a1) -- (a2);
        \draw (b1) -- (b2);
        \draw (c1) -- (c2);
        \draw (ck1) -- (ck2);
        \draw (ak) to[bend left] (bk);
        
        \draw(ak) to[bend right, in=-145] (ug) to[bend right, out=-35] (ck2);
        \draw(bk) to[bend left, in=145] (wg) to[bend left, out=35] (ck2);
        
        \draw[dotted] (a2) -- (ak);
        \draw[dotted] (b2) -- (bk);
        \draw[dotted] (c2) -- (ck1);
        
        \foreach \point in {u,v,w,a1,a2,ak,b1,b2,bk,c1,c2,ck1, ck2} {
            \fill (\point) circle (1.5pt);
        }
    
        \node[below left] at (u) {$u$};
        \node[below right] at (w) {$w$};
        \node[above] at (v) {$v$};
        \node[above] at (a1) {$a_1$};
        \node[above] at (a2) {$a_2$};
        \node[above left] at (ak) {$a_k$};
        \node[above] at (b1) {$b_1$};
        \node[above] at (b2) {$b_2$};
        \node[above right] at (bk) {$b_k$};
        \node[above right] at (c1) {$c_1$};
        \node[above right] at (c2) {$c_2$};
        \node[below right] at (ck1) {$c_{k+1}$};
         \node[below right] at (ck2) {$c_{k+2}$};
    
    \end{tikzpicture}
    \end{minipage}
    
    \caption{Constructions of diameter $2$ planar graphs with maximum degree $\Delta \geq 8$ (even on the left and odd on the right) and injective chromatic number equal to $\lfloor \frac{3}{2} \Delta \rfloor + 1$.}
    \label{fig:deg_+8}
    
\end{figure}
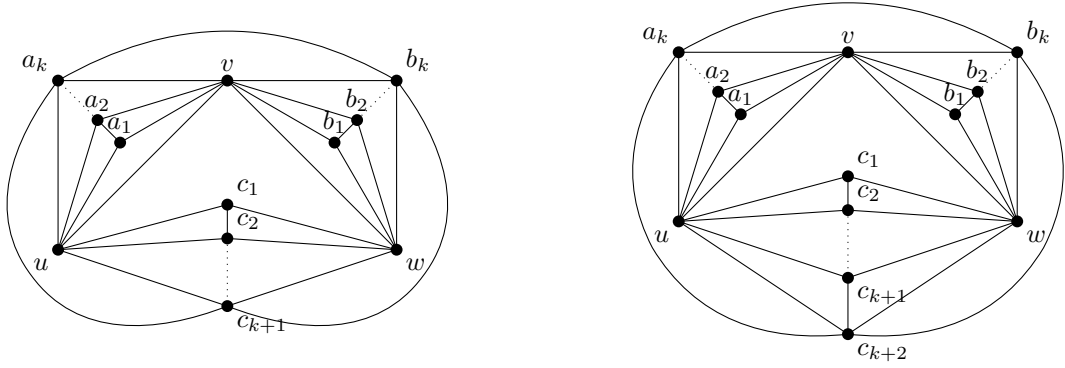



We end this section with an infinite family of planar graphs with maximum degree $3$ and injective chromatic number $5$. While not a counterexample to Conjecture~\ref{conj:chen} by Chen et al., the graphs are sharp for this conjecture as well as for Conjecture~\ref{conj:luzar}.

Let \[V_n = \{v^0_0,v^0_1\}\cup\{v^i_0, v^i_1, v^i_2, v^i_3\}^{n}_{i=1},\]
\[E_n^0 = \{v_0^0v_1^0\}\cup\{v^i_1v^i_0, v^i_1v^i_2, v^i_1v^i_3, v^i_2v^i_3\}^n_{i=1},\]
\[E_n^1 = \{v^i_0v^{i+1}_2, v^i_3v^{i+1}_0\}_{i=0}^n,\] where we take indices modulo $n+1$, $v^0_2:= v^0_1$ and $v^0_3:= v^0_1$.
We define the infinite family $\mathcal{G}':= \{(V_n, E^0_n\cup E^1_n) \mid n \geq 3 \text{ odd}\}$. It is easy to see that every $G\in \mathcal{G}'$ is planar and cubic. Carefully applying Menger's theorem shows that these graphs are $3$-connected.

\begin{theorem}
    For every graph $G \in \mathcal{G}'$, $\chi_i(G) = 5$.
\end{theorem}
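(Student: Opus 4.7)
I will prove $\chi_i(G) = 5$ by separately establishing the upper bound $\chi_i(G)\le 5$ and the lower bound $\chi_i(G)\ge 5$.

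For the upper bound, I will exhibit an explicit injective $5$-colouring $c\colon V_n\to\{1,2,3,4,5\}$ that uses a uniform pattern on the ``interior'' blocks $1,\ldots,n-1$ and adjusts blocks $0$ and $n$ to accommodate the cyclic identifications $v^{n+1}_0 := v^0_0$ and $v^{n+1}_2 := v^0_2 := v^0_1$. Concretely, set $c(v^0_0)=3$, $c(v^0_1)=2$; for $1\le i\le n-1$ set $(c(v^i_0),c(v^i_1),c(v^i_2),c(v^i_3))=(2,1,3,4)$; and for $i=n$ set $(c(v^n_0),c(v^n_1),c(v^n_2),c(v^n_3))=(5,1,2,4)$. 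Correctness reduces to verifying for each of the degree-$3$ vertices that its three neighbours carry three distinct colours. Every interior vertex contributes only one of the triples $\{1,3,4\}, \{1,3,2\}, \{1,4,2\}, \{2,3,4\}$, all clearly distinct, and the fixed special colours at blocks $0$ and $n$ produce a short list of boundary triples that are also easy to check by inspection. The verification is independent of $n$ and so handles the entire family at once.

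For the lower bound I plan to argue by contradiction. Assume that an injective $4$-colouring $c$ exists, and abbreviate $a_i = c(v^i_0)$, $b_i = c(v^i_1)$, $c_i = c(v^i_2)$, $d_i = c(v^i_3)$ for $i\geq 1$, with $c_0 = d_0 = b_0$ by the identification of $v^0_2$ and $v^0_3$ with $v^0_1$. The triangle $v^i_1 v^i_2 v^i_3$ together with pairwise distinctness among the neighbours of $v^i_1$ forces $\{b_i, c_i, d_i\}$ to take $3$ distinct colours, so a unique fourth colour $e_i$ is well-defined and $a_i \in \{b_i, e_i\}$. The neighbourhoods of $v^i_2$ and $v^i_3$ yield $a_{i-1}\in\{c_i, e_i\}$ and $a_{i+1}\in\{d_i, e_i\}$, while the neighbourhood of $v^i_0$ forces $\{b_i, c_{i+1}, d_{i-1}\}$ to be pairwise distinct, and the two vertices of block $0$ contribute the boundary constraints $\{b_0, c_1, d_n\}$ pairwise distinct and $\{a_0, a_1, a_n\}$ pairwise distinct.

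The strategy is then to treat the binary alternative $a_i \in \{b_i, e_i\}$ as a signal propagating around the cyclic chain of $n+1$ blocks. Since $a_i$ is forced to lie in the intersection $\{b_i, e_i\} \cap \{c_{i+1}, e_{i+1}\} \cap \{d_{i-1}, e_{i-1}\}$, each block's colour data essentially determines that of its neighbours. I expect the main obstacle to be carrying this propagation around the cycle cleanly and extracting the parity obstruction: because $n$ is odd, the forced pattern cannot close up at block $0$ without forcing at least two of $a_0, a_1, a_n$ to coincide, contradicting the distinctness condition imposed by $v^0_1$. This is analogous to the standard obstruction against properly $3$-colouring an odd cycle, and the crux of the argument is to formalise that the ``shift'' in block data between consecutive blocks has order $2$ in an appropriate sense, so that an odd number of blocks can never close consistently.
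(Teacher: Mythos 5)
Your upper bound is correct: the explicit colouring $(2,1,3,4)$ on the interior blocks, with the stated adjustments at blocks $0$ and $n$, does make every neighbourhood rainbow for all odd $n\ge 3$, and this part is if anything more explicit than the paper, which only asserts that a $5$-colouring is ``straightforward to construct''. The local constraints you extract for the lower bound are also all correct, and your overall strategy (propagate block data around the cyclic chain and close it up at block $0$) is in spirit the same as the paper's.

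The gap is in the lower bound, which you only sketch. The binary alternative $a_i\in\{b_i,e_i\}$ does not behave as a single order-$2$ shift, and you must split into cases; only one of them is a parity obstruction. First one has to show that either every block $1,\dots,n$ realises $a_i=b_i$ or every block realises $a_i=e_i$: this requires a propagation argument in both directions along the chain and is not automatic. In the first case your plan does work: one shows $a_{i+1}=b_{i+1}=e_i$ and $e_{i+1}=b_i$, so the doubled colour alternates with period $2$, and oddness of $n$ forces $a_n=a_1$, contradicting the distinctness of $\{a_0,a_1,a_n\}$ --- this is the contradiction you anticipate. But in the second case ($a_i=e_i$ for all $i$) there is no parity obstruction at all: the contradiction is local at block $0$, namely $a_0\in\{c_1,e_1\}$ together with $a_0\neq a_1=e_1$ forces $a_0=c_1$, symmetrically $a_0=d_n$, and this contradicts $c_1\neq d_n$ coming from the neighbourhood of $v^0_0$ --- a constraint you listed but do not invoke in your anticipated endgame. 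The paper's proof is exactly this two-case analysis (block $1$ uses three or four colours, each case closed by a different one of the two block-$0$ constraints); as written, your proposal does not close the four-colour case, so the lower bound is not yet a proof.
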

\begin{proof}
    We show that $\chi_i(G) > 4$ for any $G\in \mathcal{G}'$. Let $c$ be an injective $4$-colouring of $G$.
    Then $c$ uses either $3$ or $4$ colours for $v^1_0,v^1_1,v^1_2,v^1_3$. Suppose the former, then $c(v^1_0) = c(v^1_1)$. Since $v^2_0, v^2_1, v^2_2, v^2_3$ all have a path of length $2$ to either $v^1_0$ or $v^1_1$, they must be coloured using three colours and $c(v^2_0)=c(v^2_1)$ is different from the three colours used before. Inductively and since $n$ is odd, it follows that $c(v^n_0) = c(v^n_1) = c(v^1_0) = c(v^1_1)$, which is a contradiction since $v^n_0$ and $v^1_0$ have a path of length $2$ between them. Therefore, $v^1_0,v^1_1,v^1_2,v^1_3$ all have a distinct colour, but then it is easy to see that $c(v^0_0) = c(v^1_2)$ and for the same reasoning, $c(v^0_0) = c(v^n_3)$, which is a contradiction, since $v^1_2$ and $v^n_3$ share a path of length $2$.

    Finally, it is straightforward to construct an injective $5$-colouring for $G$, which finishes the proof.
\end{proof}






We note that $\Delta = 4$, is the only maximum degree for which we did not find an infinite family which is sharp for Conjecture~\ref{conj:luzar}. The counterexample with $\Delta = 4$ and $\chi_i(H) = 9$ presented by Lu\v zar and \v Skrekovski is the only example with these properties found by our computer search. The question arises if there are other graphs with these characteristics.

\section{Higher girth}\label{sec:girth} 

We note that all of our constructions so far made use of triangles.
For the injective chromatic number of planar graphs with girth greater than or equal to four, La and \v{S}torgel proposed a conjecture closely related to Conjecture~\ref{conj:luzar}. We denote the girth of a graph $G$ by $g(G)$.

\begin{conjecture}
\textup{(La, \v{S}torgel~\cite{la20232}).} Let $G$ be a planar graph with maximum degree $\Delta\geq 3$ and $g(G) \geq 4$. Then,
\[
  \chi_i(G) \leq 
     \begin{cases}
       \text{$4$,}  & \text{if $\Delta = 3$;}\\
       \text{$\Delta + 2$,}  & \text{if $4 \leq \Delta \leq 5$;}\\
       \text{$\lfloor \frac{3}{2} \Delta \rfloor$,}  & \text{if $\Delta \geq 6$.}\\
     \end{cases}
\] 
\label{conj:la2}
\end{conjecture}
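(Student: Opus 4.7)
Since this is the La--\v Storgel conjecture, a complete proof remains an open problem, so my realistic plan mirrors the paper's handling of Conjecture~\ref{conj:luzar}: gather computational evidence and construct sharp infinite families. First I would re-use the backtracking algorithm of Section~\ref{sect:results}, feeding it all connected planar graphs of girth at least $4$ generated by \texttt{plantri} (which accepts a minimum girth parameter). Because the girth restriction shrinks the search space, verification should be feasible up to larger orders than for Conjecture~\ref{conj:luzar}; a failure to find counterexamples provides the first kind of evidence, and the resulting tables would pinpoint which $\Delta$ and which orders admit the tightest examples.

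Second, I would exhibit infinite families of $3$-connected planar graphs of girth at least $4$ whose injective chromatic number matches the conjectured bound for the smallest and presumably hardest cases, namely $\Delta = 3$ (bound $4$) and $\Delta = 4$ (bound $6$). The triangle-based construction of Lemma~\ref{lemma:inf_family_procedure} is unusable here since every attachment creates a $3$-cycle, but a triangle-free analogue should work: start from a small sharp example found in the computer search of the previous step, identify a gadget whose iterated attachment around a fixed $4$-face preserves both the maximum degree and the injective chromatic number, and argue $3$-connectivity by an adapted Menger-style argument. For Lu\v zar--\v Skrekovski--Tancer's girth-$5$ question, the same recipe is the plan, now attaching around a $5$-face and targeting $\chi_i = \Delta + 2$ for $\Delta = 3$.

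For a genuine proof attempt one would deploy the discharging method: take a minimum counterexample $G$, assign the initial charges $d(v)-4$ and $\ell(F)-4$ (where $\ell(F)$ is the length of face $F$), which sum to $-8$ by Euler's formula since $g(G)\geq 4$, catalogue reducible configurations (chiefly low-degree vertices $v$ whose distance-two neighbourhood is small enough that any optimal injective colouring of $G-v$ extends), and redistribute charges so that the absence of every reducible configuration forces a non-negative total, a contradiction. The main obstacle is the $\Delta = 3$ case with bound exactly $4$: the slack is minimal, $4$-faces are still permitted and each one creates two distance-two pairs that must receive distinct colours, and the usual reducible configurations turn out to be insufficient. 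This is presumably why the conjecture remains open, and my plan would at best yield partial structural results for large $\Delta$, with the computational verification and the sharpness families forming the main contribution in line with the spirit of this paper.
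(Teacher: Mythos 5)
The statement is a conjecture that the paper does not (and cannot yet) prove; like you, the authors only supply computational verification (up to order $16$, via \texttt{geng}/\texttt{planarg} rather than \texttt{plantri}) together with sharp $3$-connected infinite families of girth $4$ for $\Delta=3$ (the $k$-prisms with $k\equiv 0\bmod 3$) and $\Delta=4$ (Theorem~\ref{thm:g4family}), exactly as your plan outlines. Your proposal therefore takes essentially the same approach as the paper, and your added remarks on why a discharging proof is currently out of reach are a reasonable bonus rather than a gap.
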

We note that this is equivalent to saying that for a planar graph $G$ with maximum degree at least $3$ and girth at least $4$, we have $\chi_i(G) \le \lfloor \frac{3}{2}\Delta\rfloor$.

Using our algorithm from Section~\ref{sect:results}, we verified the conjecture for small orders using \texttt{nauty}'s \texttt{geng} and \texttt{planarg}~\cite{Mc14} to generate planar graphs with girth at least $4$. For order $16$ the generation took approximately 1\,000 CPU-hours, while our program for computing the injective chromatic number took approximately 4 hours. We obtain the following results.
\begin{proposition}\label{prop:g4_comp}
    Conjecture~\ref{conj:la2} holds up to order $16$.
\end{proposition}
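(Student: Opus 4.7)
The plan is to carry out an exhaustive computer verification entirely analogous to the one performed for Proposition~2.1 (Conjecture~\ref{conj:luzar} up to order $13$), but now restricted to planar graphs of girth at least~$4$. Concretely, I would for each $n$ from $3$ up to $16$ generate all connected planar graphs on $n$ vertices of girth at least~$4$ and check, for every such graph $G$ with $\Delta(G)\ge 3$, that its injective chromatic number satisfies the bound of Conjecture~\ref{conj:la2}, which (as noted immediately after the statement of the conjecture) is equivalent to $\chi_i(G)\le \lfloor \tfrac{3}{2}\Delta(G)\rfloor$.

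For generation I would use \texttt{geng} from \texttt{nauty}~\cite{Mc14} with the options restricting minimum degree, maximum number of edges (at most $2n-4$ by Euler's formula in the triangle-free case), and minimum girth $4$, followed by \texttt{planarg} to filter out the non-planar graphs. As in Section~\ref{sect:results}, it suffices to generate graphs of minimum degree~$2$: exactly the same pendant-vertex reduction applies, since deleting a degree-$1$ vertex $w$ preserves girth $\geq 4$, decreases $\Delta$ by at most $1$, and decreases $\chi_i$ by at most $1$ with equality only when $\Delta(G-w)=\Delta(G)-1$. Hence any counterexample yields a smaller counterexample of minimum degree $2$, and only the latter need to be tested.

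For each generated graph I would then run the backtracking algorithm described in Section~\ref{sect:results} (available at~\cite{DGR24Program}) to compute $\chi_i(G)$ exactly, and compare the outcome with $\lfloor\tfrac{3}{2}\Delta(G)\rfloor$. If no graph violates the bound, the proposition follows. Correctness of the whole verification reduces to correctness of the two existing tools, \texttt{geng}/\texttt{planarg} and our injective-chromatic-number program, which was already tested separately (see Appendix~\ref{app:correctness}).

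The main obstacle is the sheer computational cost of generating girth-$4$ planar graphs up to order $16$: this is where essentially all of the work sits, as the colouring step is comparatively cheap on triangle-free graphs (the bound $\lfloor 3\Delta/2\rfloor$ is relatively loose and pruning in the backtracking algorithm is very effective). I would therefore parallelise generation across cores, split the search by a canonical parameter such as the number of edges or the maximum degree, and report the resulting CPU-time totals (approximately $1\,000$ CPU-hours for generation and $4$ CPU-hours for colouring at order~$16$) in the paragraph immediately following the proposition, mirroring the presentation used for the earlier computational results.
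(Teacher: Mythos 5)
Your proposal matches the paper's approach exactly: the paper likewise generates all planar graphs of girth at least $4$ up to order $16$ with \texttt{geng} and \texttt{planarg} and checks the bound of Conjecture~\ref{conj:la2} with the backtracking algorithm of Section~\ref{sect:results}, reporting the same CPU-time figures. The additional details you supply (the $2n-4$ edge bound and the pendant-vertex reduction, which indeed carries over since the bound function increases by at least one per unit of $\Delta$ and vertex deletion preserves girth) are consistent refinements of the same computation.
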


The conjecture is based on a construction mentioned by Lu\v zar, \v Skrekovski and Tancer~\cite{LST09} in which they take Shannon's triangles and subdivide their edges. Given a starting triangle with maximum degree $\Delta\geq 3$, this gives a $2$-connected planar graph $G$ of girth $4$ with $\chi_i(G) = \lfloor \frac{3}{2}\Delta\rfloor$.

It is relatively easy to obtain infinitely many $2$-connected examples for each $\Delta$ which are sharp for this bound. Start with the construction given by Lu\v zar et al., connect two vertices of degree $2$ with an edge and subdivide this edge twice. Repeat the procedure by now adding an edge between the new vertices of degree $2$.  We state without proof that one can apply this construction such that these graphs still have girth $4$, maximum degree $\Delta$, are planar and have injective chromatic number equal to the base graph. See Figure~\ref{fig:example_2-conn_g4}.

\begin{figure}[!htb]
    \centering
    \begin{tikzpicture}[vertex/.style={circle,draw,minimum size=1em, fill = black, scale = 0.45}, scale = 0.8]
        \node[vertex] (0) at (0,0) {}; 
        \node[vertex] (1) at (4,0) {}; 
        \node[vertex] (2) at (2,3.5) {}; 

        \draw (0) -- (1) -- (2) -- (0);

        \node[vertex] (3) at ($(0)!0.5!(1)$) {};
        \node[vertex] (4) at ($(1)!0.5!(2)$) {};
        \node[vertex] (5) at ($(2)!0.5!(0)$) {};

        \node[vertex] (6) at ($(0)!{(0.5/cos(-30))}!-30:(1)$) {};
        \draw (0) -- (6) -- (1);
        \node[vertex] (7) at ($(1)!{(0.5/cos(-30))}!-30:(2)$) {};
        \draw (1) -- (7) -- (2);
        \node[vertex] (8) at ($(2)!{(0.5/cos(-30))}!-30:(0)$) {};
        \draw (2) -- (8) -- (0);

        \node[vertex] (9) at ($(0)!{(0.5/cos(-15))}!-15:(1)$) {};
        \draw (0) -- (9) -- (1);
        \node[vertex] (10) at ($(1)!{(0.5/cos(-15))}!-15:(2)$) {};
        \draw (1) -- (10) -- (2);
        \node[vertex] (11) at ($(2)!{(0.5/cos(-15))}!-15:(0)$) {};
        \draw (2) -- (11) -- (0);
    \end{tikzpicture}\qquad
    \begin{tikzpicture}[vertex/.style={circle,draw,minimum size=1em, fill = black, scale = 0.45}, scale=0.8]
        \node[vertex] (0) at (0,0) {}; 
        \node[vertex] (1) at (4,0) {}; 
        \node[vertex] (2) at (2,3.5) {}; 

        \draw (0) -- (1) -- (2) -- (0);

        \node[vertex] (3) at ($(0)!0.5!(1)$) {};
        \node[vertex] (4) at ($(1)!0.5!(2)$) {};
        \node[vertex] (5) at ($(2)!0.5!(0)$) {};

        \node[vertex] (6) at ($(0)!{(0.5/cos(-30))}!-30:(1)$) {};
        \draw (0) -- (6) -- (1);
        \node[vertex] (7) at ($(1)!{(0.5/cos(-30))}!-30:(2)$) {};
        \draw (1) -- (7) -- (2);
        \node[vertex] (8) at ($(2)!{(0.5/cos(-30))}!-30:(0)$) {};
        \draw (2) -- (8) -- (0);

        \node[vertex] (9) at ($(0)!{(0.5/cos(-15))}!-15:(1)$) {};
        \draw (0) -- (9) -- (1);
        \node[vertex] (10) at ($(1)!{(0.5/cos(-15))}!-15:(2)$) {};
        \draw (1) -- (10) -- (2);
        \node[vertex] (11) at ($(2)!{(0.5/cos(-15))}!-15:(0)$) {};
        \draw (2) -- (11) -- (0);

        \node[vertex] (12) at ($(3)!0.33!(4)$) {};
        \node[vertex] (13) at ($(3)!0.66!(4)$) {};
        \draw (3) -- (4);
    \end{tikzpicture} \qquad
    \begin{tikzpicture}[vertex/.style={circle,draw,minimum size=1em, fill = black, scale = 0.45}, scale=0.8]
        \node[vertex] (0) at (0,0) {}; 
        \node[vertex] (1) at (4,0) {}; 
        \node[vertex] (2) at (2,3.5) {}; 

        \draw (0) -- (1) -- (2) -- (0);

        \node[vertex] (3) at ($(0)!0.5!(1)$) {};
        \node[vertex] (4) at ($(1)!0.5!(2)$) {};
        \node[vertex] (5) at ($(2)!0.5!(0)$) {};

        \node[vertex] (6) at ($(0)!{(0.5/cos(-30))}!-30:(1)$) {};
        \draw (0) -- (6) -- (1);
        \node[vertex] (7) at ($(1)!{(0.5/cos(-30))}!-30:(2)$) {};
        \draw (1) -- (7) -- (2);
        \node[vertex] (8) at ($(2)!{(0.5/cos(-30))}!-30:(0)$) {};
        \draw (2) -- (8) -- (0);

        \node[vertex] (9) at ($(0)!{(0.5/cos(-15))}!-15:(1)$) {};
        \draw (0) -- (9) -- (1);
        \node[vertex] (10) at ($(1)!{(0.5/cos(-15))}!-15:(2)$) {};
        \draw (1) -- (10) -- (2);
        \node[vertex] (11) at ($(2)!{(0.5/cos(-15))}!-15:(0)$) {};
        \draw (2) -- (11) -- (0);

        \node[vertex] (12) at ($(3)!0.33!(4)$) {};
        \node[vertex] (13) at ($(3)!0.66!(4)$) {};
        \draw (3) -- (4);

        \node[vertex] (14) at ($(3)!0.33/cos(30)!30:(4)$) {};
        \node[vertex] (15) at ($(4)!0.33/cos(-30)!-30:(3)$) {};
        \draw (12) -- (14) -- (15) -- (13);
    \end{tikzpicture} 
    \caption{Example for $\Delta = 6$ of a construction for obtaining infinitely many $2$-connected planar graphs of girth $4$, maximum degree $\Delta$ and injective chromatic number $\lfloor \frac{3}{2}\Delta\rfloor$.}
    \label{fig:example_2-conn_g4}
\end{figure}
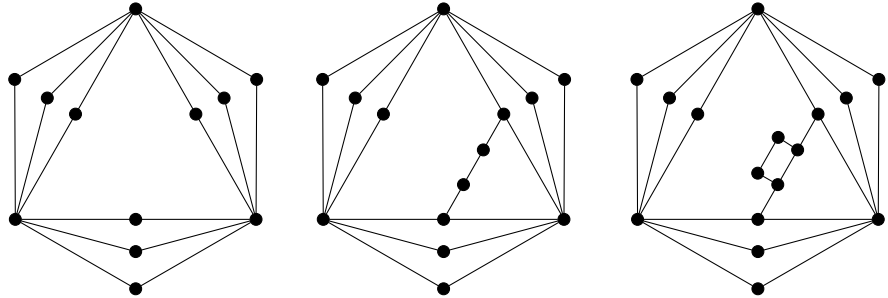


By filtering the sharp examples obtained from the computation of Proposition~\ref{prop:g4_comp} for $3$-connectivity, using \texttt{countg} from the \texttt{nauty} library~\cite{Mc14}, we obtain the following.
\begin{proposition}
    For $\Delta\geq 5$ there are no $3$-connected graphs of maximum degree $\Delta$ and girth at least $4$ up to order $16$ for which the injective chromatic number is sharp for Conjecture~\ref{conj:la2}.
\end{proposition}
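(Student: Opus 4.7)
The statement is computational in nature, so the plan is to leverage the data already assembled for Proposition~\ref{prop:g4_comp} rather than carrying out a fresh enumeration. First I would recall that, in order to verify Conjecture~\ref{conj:la2} up to order $16$, all connected planar graphs of minimum degree at least $2$ and girth at least $4$ on $n\le 16$ vertices were generated via \texttt{geng} and \texttt{planarg}, and their injective chromatic numbers were computed with the backtracking algorithm of Section~\ref{sect:results}. This produced, for every order $n\le 16$ and every maximum degree $\Delta$, the complete list of planar graphs of girth at least $4$ whose injective chromatic number attains the conjectured value, namely $\lfloor \tfrac{3}{2}\Delta\rfloor$ for $\Delta\ge 6$ and $\Delta+2$ for $\Delta=5$.

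The next step is simply to post-process this list. Applying \texttt{countg} (with the \texttt{-k3} connectivity filter) from the \texttt{nauty} suite~\cite{Mc14} to the sharp examples, I would check, for each maximum degree $\Delta\ge 5$ and each order $n\le 16$, whether any of the listed graphs survive the $3$-connectivity test. The claim is that none do; reporting the empty output of \texttt{countg -k3} in each case, together with the fact that the input lists are exhaustive by Proposition~\ref{prop:g4_comp}, completes the argument.

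There is essentially no mathematical obstacle here, since correctness of the underlying enumeration is already established for Proposition~\ref{prop:g4_comp} and the correctness of \texttt{nauty}'s connectivity filter is standard; the only thing to be careful about is bookkeeping, i.e.\ making sure the filter is applied to \emph{all} sharp examples (not only those achieving the maximum $\Delta$ at each order) and that the $\Delta=5$ case uses the bound $\Delta+2=7$ while $\Delta\ge 6$ uses $\lfloor\tfrac{3}{2}\Delta\rfloor$, in accordance with Conjecture~\ref{conj:la2}. The proof can therefore be written in essentially one sentence, pointing back to the computation of Proposition~\ref{prop:g4_comp} and noting that filtering its output with \texttt{countg -k3} yields no graphs for any $\Delta\ge 5$.
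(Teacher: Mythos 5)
Your proposal matches the paper's approach exactly: the authors likewise obtain this proposition by taking the sharp examples from the computation behind Proposition~\ref{prop:g4_comp} and filtering them for $3$-connectivity with \texttt{countg} from the \texttt{nauty} library. No gap; the argument is the same computational post-processing step.
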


Contrary to the previous result, we will now show that for $\Delta = 3$ and $\Delta = 4$ relatively small $3$-connected examples exist with maximum degree $\Delta$, girth at least $4$ and injective chromatic number sharp for Conjecture~\ref{conj:la2}. In particular, we even show there are infinitely many $3$-connected examples with $\Delta = 3$ and $\Delta = 4$ which are sharp for the conjecture by La and \v Storgel.

Let the $k$-prism, with $k\geq 3$, be the graph consisting of two $k$-cycles $u_0u_1\ldots u_{k-1}$ and $v_0v_1\ldots v_{k-1}$ and edges $u_iv_i$ for $0\le i \le k-1$.
These graphs are clearly $3$-connected, planar and when $k>4$, their girth is $4$. 
\begin{theorem}
    The $k$-prism with $k \ge 3$ has injective chromatic number $4$ if and only if $k \equiv 0\bmod 3$. 
\end{theorem}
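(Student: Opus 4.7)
The plan is to prove the biconditional $\chi_i(k\text{-prism}) = 4 \iff k \equiv 0 \pmod 3$ through a direct analysis of the injective constraint graph $H$ of the $k$-prism. For $k \geq 4$ the prism has girth $4$, so two vertices share a common neighbour iff they lie at distance exactly $2$; thus $H$ has edges $u_i u_{i\pm 2}$, $v_i v_{i\pm 2}$, and $u_i v_{i\pm 1}$ (indices modulo $k$). The triangular case $k = 3$ contains additional conflicts from the two facial triangles and will be handled by direct inspection. I will also establish $\chi_i \leq 4$ universally via an explicit periodic $4$-colouring, which handles the upper bound on both sides of the equivalence.

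For the forward direction, assuming $k \equiv 0 \pmod 3$, I aim to prove the matching lower bound $\chi_i \geq 4$ by ruling out all $3$-colourings of $H$. The approach would be to suppose a hypothetical injective $3$-colouring $c$ and track the forced values along the cycle induced on the $u$-vertices by the constraints $u_i \sim u_{i\pm 2}$; the cross-constraints $u_i \sim v_{i \pm 1}$ then determine (or severely restrict) the $v$-colours, and the divisibility condition $3 \mid k$ should produce a wrap-around incompatibility propagating around the prism. For the reverse direction, assuming $k \not\equiv 0 \pmod 3$, I aim to exhibit an explicit injective $3$-colouring of $H$ — yielding $\chi_i \leq 3$ and thus $\chi_i \neq 4$ — using a colour assignment based on residues modulo $3$ with an offset between the $u$- and $v$-cycles chosen to exploit $\gcd(k,3) = 1$.

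The main obstacle is the lower bound in the forward direction: ruling out every $3$-colouring of $H$ when $3 \mid k$ is a global argument rather than a local one, and is the technical heart of the plan. The cleanest approach is likely to define a cumulative colour-shift invariant around the $u$-cycle — recording, for example, the net cyclic permutation of colour classes induced by traversing one full revolution — and to argue that nontrivial divisibility of $k$ by $3$ forces this invariant into a value incompatible with the $v$-cycle constraints. Matching this with a clean construction in the reverse case, so that the same invariant vanishes exactly when $k \not\equiv 0 \pmod 3$, would complete both directions of the equivalence simultaneously.
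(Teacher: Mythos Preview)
Your plan has the two directions of the biconditional reversed, and this is fatal. The printed theorem statement contains a typo: the paper's own proof actually establishes that the $k$-prism has $\chi_i = 4$ if and only if $k \not\equiv 0 \pmod 3$. Concretely, when $3 \mid k$ the assignment $c(u_i) = c(v_i) = i \bmod 3$ is a valid injective $3$-colouring --- this is exactly the colouring the paper writes down in the final line of its proof --- so $\chi_i = 3$ in that case (sanity check: the $3$-prism has neighbouring graph $K_{2,2,2}$, of chromatic number $3$). Conversely, the cube ($k=4$) has neighbouring graph consisting of two disjoint copies of $K_4$, so $\chi_i = 4$ there. Hence your forward direction, which tries to rule out all injective $3$-colourings when $3 \mid k$, is attempting to prove a false claim; and your reverse direction, which tries to build a residue-based $3$-colouring when $3 \nmid k$, cannot succeed because no injective $3$-colouring exists in that regime (your proposed residues modulo $3$ are not even well-defined on $\mathbb{Z}/k\mathbb{Z}$ when $\gcd(k,3)=1$, and no offset can repair this).

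Once you swap the two directions, your overall strategy coincides with the paper's: fix the three colours on the neighbours of one vertex, propagate the forced values around the prism, and obtain a wrap-around contradiction precisely when $3 \nmid k$; then exhibit the periodic $3$-colouring when $3 \mid k$. Your plan is in one respect more complete than the paper's argument: you intend to exhibit a universal injective $4$-colouring to secure $\chi_i \leq 4$ for all $k$, whereas the paper's proof only shows $\chi_i > 3$ for $3 \nmid k$ and leaves the matching upper bound tacit.
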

\begin{proof}
    Let $G$ be a $k$-prism and $c$ be an injective $3$-colouring of $G$. We take indices modulo $k$. We see that the neighbours of any $u_i$ or $v_i$ need three different colours in $c$. Without loss of generality, let $u_0, u_2$ and $v_1$ have colours $1,2$ and $3$, respectively.
    
    Suppose $k$ is odd. Then all colours in the graph are determined, since $v_3$ must have colour $1$, which means $u_4$ must have colour $3$, etc. If $k\equiv 1\bmod 6$, then $c(u_{k-1}) = 1$. If $k\equiv 5\bmod 6$, then $c(u_{k-2}) = 1$. Both of which share a common neighbour with $u_0$, which gives us a contradiction.

    Suppose $k$ is even, then only half of the vertices' colours are determined, but if $k\equiv 2\bmod 6$, then $c(v_{k-1}) = 3$, which gives a contradiction with the colour of $v_1$ and if $k\equiv 4\bmod 6$, then $c(v_{k-1})=1$, which gives a contradiction with the colour of $u_0$.  

   Finally, suppose $k\equiv 0\bmod 3$, then $c$ given by $c(u_i) = c(v_i) = j+1$ when $i = 3\alpha + j$ for $j\in \{0,1,2\}$ and $i\in \{0,\ldots, k-1\}$ is a injective $3$-colouring of $G$.  
\end{proof}

\begin{figure}[!htb]
    \centering
    \begin{tikzpicture}[main_node/.style={circle,fill,inner sep=3pt, scale=0.7}, scale=0.6]

        \node[main_node, label=$u_3$] (0) at (3.1428571428571423, 0.7857142857142851) {};
        \node[main_node, label=right:$u_1$] (1) at (0.026915113871636365, 2.4026915113871636) {};
        \node[main_node, label=right:$u_2$] (2) at (0.026915113871636365, -0.829192546583851) {};
        \node[main_node] (3) at (-4.857142857142858, 0.7857142857142851) {};
        \node[main_node] (4) at (-1.74120082815735, 2.400621118012422) {};
        \node[main_node] (5) at (-1.74120082815735, -0.829192546583851) {};
        
        \node[main_node, label=$u$] (6) at (1.4699792960662537, 0.7857142857142851) {};
        \node[main_node] (7) at (-3.1842650103519676, 0.7857142857142851) {};
        \node[main_node] (8) at (-0.8571428571428563, 4.785714285714286) {};
        \node[main_node] (9) at (-0.8571428571428563, -3.2142857142857135) {};
        \node[main_node] (10) at (-0.8571428571428563, 0.7836438923395455) {};

         \path[draw]
        (0) edge node {} (6) 
        (0) edge node {} (8) 
        (0) edge node {} (9) 
        (1) edge node {} (6) 
        (1) edge node {} (8) 
        (1) edge node {} (10) 
        (2) edge node {} (6) 
        (2) edge node {} (9) 
        (2) edge node {} (10) 
        (3) edge node {} (7) 
        (3) edge node {} (8) 
        (3) edge node {} (9) 
        (4) edge node {} (7) 
        (4) edge node {} (8) 
        (4) edge node {} (10) 
        (5) edge node {} (7) 
        (5) edge node {} (9) 
        (5) edge node {} (10) 
        ;

    \end{tikzpicture}
\caption{Graph with maximum degree $4$, girth $4$ and injective chromatic number $6$ used as a base graph for the infinite family of Theorem~\ref{thm:g4family}.}
\label{fig:g4_base_graph}
\end{figure}
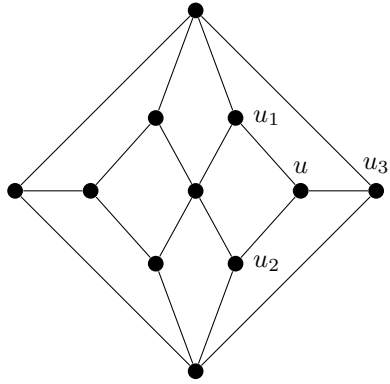

Let $G$ be a graph with a vertex $u$ of degree $3$ and neighbours $u_1, u_2$ and $u_3$. Let $V = V(G)\cup \{v_1, v_2, v_3\}$ and $E = \{v_1v_3,v_2v_3, v_1u_1,v_1u_3,v_2u_2,v_2u_3,v_3u\}\cup E(G)\setminus\{uu_3\}$. We obtain a new graph $G' = (V,E)$. Note that $v_3$ is now a vertex of degree $3$ in $G'$. We can relabel the graph so that $u, v_1, v_2, v_3$ become $u_3, u_2, u_1, u$, respectively. In this way we can iterate the procedure to obtain an infinite family $\mathcal{K}(G; u; u_1, u_2, u_3)$.
It is easy to see that if $G$ is planar then all members of the infinite family are planar, that if $G$ has girth $4$ then all members of the family have girth $4$, and that if $\Delta(G) = 4$ and $u_1, u_2$ and $u_3$ are all of degree $3$ then every member of the family has maximum degree $4$. If $G$ is $3$-connected, one can show using Menger's theorem that all members of the family are also $3$-connected.
\begin{theorem}\label{thm:g4family}
    There is an infinite family of $3$-connected planar graphs with maximum degree $4$, girth $4$ and injective chromatic number $6$.
\end{theorem}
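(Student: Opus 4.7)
The plan is to apply the construction $\mathcal{K}(G; u; u_1, u_2, u_3)$ to the graph $G$ depicted in Figure~\ref{fig:g4_base_graph}, with the indicated vertex $u$ and its three neighbours $u_1, u_2, u_3$ (each of degree $3$). That $G$ is planar, $3$-connected, of maximum degree $4$ and girth $4$ is direct from the picture; the equality $\chi_i(G) = 6$ follows from the computational verification behind Proposition~\ref{prop:g4_comp}, and may also be checked directly by exhibiting an explicit injective $6$-colouring and a short argument ruling out $5$ colours. The paragraph preceding the theorem then ensures that every member of $\mathcal{K}$ is planar, $3$-connected, of maximum degree $4$ and girth $4$, so only the equality $\chi_i(G') = 6$ remains to be proved for each $G' \in \mathcal{K}$.

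I proceed by induction on the number of construction steps, writing $G_i$ for the $i$-th graph in the family. For the lower bound $\chi_i(G_{i+1}) \geq \chi_i(G_i)$, I show that any injective colouring $c'$ of $G_{i+1}$ restricts to an injective colouring of $G_i$. The only edge removed when passing from $G_i$ to $G_{i+1}$ is $u^{(i)} u_3^{(i)}$, so the only pairs in $V(G_i)$ that could lose a common neighbour are $\{u_3^{(i)}, u_j^{(i)}\}$ for $j\in\{1,2\}$, formerly joined through $u^{(i)}$, and $\{u^{(i)}, w\}$ for $w\in N_{G_i}(u_3^{(i)})\setminus\{u^{(i)}\}$, formerly joined through $u_3^{(i)}$. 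The first two pairs acquire $v_1^{(i+1)}$ and $v_2^{(i+1)}$ as replacement common neighbours in $G_{i+1}$; for the base step $i=0$, a direct inspection of Figure~\ref{fig:g4_base_graph} handles the second type, and for $i\ge 1$ the vertex $u_3^{(i)} = u^{(i-1)}$ has the uniform neighbourhood $\{u^{(i)}, u_1^{(i-1)}, u_2^{(i-1)}\}$, so the two resulting pairs of the second type each acquire one of $v_1^{(i)}, v_2^{(i)}$ as a new common neighbour in $G_{i+1}$.

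For the upper bound $\chi_i(G_{i+1}) \leq 6$, I extend an injective $6$-colouring $c$ of $G_i$ to $G_{i+1}$ by assigning colours to $v_1^{(i+1)}, v_2^{(i+1)}, v_3^{(i+1)}$. The set of colours forbidden to $c'(v_3^{(i+1)})$ is exactly $\{c(u_1^{(i)}), c(u_2^{(i)}), c(u_3^{(i)})\}$, leaving three allowed choices. A direct computation shows that $c'(v_1^{(i+1)})$ and $c'(v_2^{(i+1)})$ share a common absolute forbidden set $F$ of size at most $4$, consisting of $c(u^{(i)})$ together with the colours of the neighbours of $u_1^{(i)}, u_2^{(i)}, u_3^{(i)}$ in $G_i$ other than $u^{(i)}$, with the extra requirement $c'(v_1^{(i+1)}) \ne c'(v_2^{(i+1)})$. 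Since $|\{1,\dots,6\}\setminus F|\ge 2$, compatible choices for the pair $(c'(v_1^{(i+1)}), c'(v_2^{(i+1)}))$ always exist, and because $G_{i+1}$ has girth $4$ no two of $v_1^{(i+1)}, v_2^{(i+1)}, v_3^{(i+1)}$ share a common neighbour there, so $c'(v_3^{(i+1)})$ may be chosen independently.

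The main obstacle is the bookkeeping for the upper bound, in particular checking that the forbidden colour sets stay bounded by $5$ uniformly in $i$. This rests on the observation that for $i\ge 1$ the neighbourhoods of $u_1^{(i)}, u_2^{(i)}, u_3^{(i)}$ in $G_i$ stabilise to a fixed pattern dictated by the construction, namely three two-element sets whose union has size exactly $3$, so the count $|F|\le 1+3 = 4$ is automatic once the base case $i=0$ has been verified by inspection of Figure~\ref{fig:g4_base_graph}.
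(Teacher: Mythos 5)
Your proposal is correct and follows the same overall architecture as the paper's proof (iterate the construction from the Figure~\ref{fig:g4_base_graph} base graph, lower bound by a restriction/subgraph argument grounded in a verified base case, upper bound by inductively extending a $6$-colouring), but the two halves are executed differently. For the lower bound, the paper simply observes that $G - uu_3$ is a subgraph of every member of $\mathcal{K}$ and verifies by computer that $\chi_i(G-uu_3) > 5$, which immediately gives $\chi_i(H)\ge 6$ by monotonicity under subgraphs; you instead prove stepwise monotonicity $\chi_i(G_{i+1})\ge\chi_i(G_i)$ by checking that every pair losing the common neighbour created by the deleted edge $u^{(i)}u_3^{(i)}$ retains another one. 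Your case analysis of the affected pairs is complete and correct (I verified the base case via the vertices $8$ and $9$ and the stabilised pattern for $i\ge 1$), but it is more work than the paper's one-line subgraph observation, and it still bottoms out in a computational check of $\chi_i(G)\ge 6$ for the base graph --- note that Proposition~\ref{prop:g4_comp} as stated only verifies the \emph{upper} bound of Conjecture~\ref{conj:la2}, so you should cite the sharp-example data or the direct check rather than that proposition. For the upper bound, the paper bounds the forbidden set of $v_1$ by $5$ using the fact that $u_1$ and $u_3$ share a common neighbour outside $\{u,v_1,v_2\}$; your bookkeeping via the size-$4$ set $F$ (using that the punctured neighbourhoods of $u_1,u_2,u_3$ have union of size exactly $3$, both in the base graph and in the stabilised pattern) is equally valid and arguably cleaner. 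One misstatement to fix: it is not true that ``no two of $v_1,v_2,v_3$ share a common neighbour'' --- $v_1$ and $v_2$ share both $u_3$ and $v_3$ --- and girth $4$ only rules out common neighbours for the \emph{adjacent} pairs $(v_1,v_3)$ and $(v_2,v_3)$; since you already impose $c'(v_1)\ne c'(v_2)$ separately, this does not break the argument, but the sentence should be restricted to the pairs involving $v_3$.
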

\begin{proof}
    Let $G$ be the graph\footnote{{This graph can be inspected on the House of Graphs~\cite{CDG23} at \url{https://houseofgraphs.org/graphs/1158}}.} of Figure~\ref{fig:g4_base_graph} and let $u, u_1, u_2, u_3$ be the vertices labelled in the figure. Then $\mathcal{K} := \mathcal{K}(G;u; u_1, u_2, u_3)$ is an infinite family of $3$-connected planar graphs of maximum degree $4$. 

    It is easy to verify by computer that $\chi_i(G - uu_3) > 5$, since this is a subgraph of all $H\in \mathcal{K}$ it follows that $\chi_i(H)\geq 5$. 
    It is straightforward to check that $G$ has an injective $6$-colouring. Let $H$ be any member of $\mathcal{K}$ with an injective $6$-colouring $c$ and let $H'$ be its successor by applying the procedure to vertices $u^H, u^H_1, u^H_2, u^H_3$ of $H$. Let $v_1, v_2, v_3$ be the new vertices of $H'$ as described by the procedure. We construct an injective $6$-colouring $c'$ on $H'$. Let $c'(v) = c(v)$ for all $v\in V(H)$. We see that $c'(v_1)$ cannot have the colour of $u$, of $v_2$ and of the neighbours of $u_1$ and $u_3$. Since $u_1$ and $u_3$ are of degree $3$ and share a common neighbour which is not $v_1, v_2$ or $u$, this restricts at most $5$ colours. Similarly, we have a free colour for $v_2$ and three free colours for $v_3$. Hence, we can find an injective $6$-colouring for $H'$. By induction it follows that every member of $\mathcal{K}$ has injective chromatic number $6$.    
\end{proof}

A related question for girth at least five was asked by Lu\v{z}ar, \v{S}krekovski, and Tancer~\cite{LST09}.

\begin{problem}
\textup{(Lu\v{z}ar, \v{S}krekovski and Tancer).} Does an integer $M$ exist such that every planar graph $G$ with maximum degree $\Delta \geq M$ and girth at least $5$ is injectively ($\Delta + 1$)-colourable? 
\label{prob:luzar}
\end{problem}

Using our algorithm for computing the injective chromatic number and $\texttt{geng}$ and $\texttt{planarg}$~\cite{Mc14}  
for generating planar graphs of girth at least $5$, we find the following.

\begin{proposition}\label{prop:g5_comp}
    Up to order $21$ every planar graph with girth at least $5$ is injectively $(\Delta + 1)$-colourable. 
\end{proposition}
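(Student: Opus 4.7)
The proof is computational, so the plan is to enumerate all candidate graphs up to order $21$ and verify the bound on each using the backtracking algorithm from Section~\ref{sect:results}. The first step is to generate, for each $n$ with $3\le n\le 21$, all connected planar graphs of order $n$ and girth at least $5$. The approach mirrors the one used for Propositions earlier in this paper: invoke \texttt{geng} from McKay's \texttt{nauty} package~\cite{Mc14} with the appropriate edge bounds (note that Euler's formula combined with the girth-$5$ restriction gives $|E|\le \tfrac{5}{3}(n-2)$, which shrinks the search space substantially compared to the general planar case), filter the output through \texttt{planarg} to keep only planar graphs, and finally retain only those with girth at least $5$ using \texttt{countg} or an inline girth check. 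As in Section~\ref{sect:results}, it suffices to restrict to minimum degree at least $2$: if $G$ has a pendant vertex $w$, then $G-w$ still has girth at least $5$, maximum degree at least $\Delta(G)-1$, and injective chromatic number at least $\chi_i(G)-1$, so $G$ is a counterexample to the $(\Delta+1)$-bound only if $G-w$ is.

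The second step is to run the backtracking algorithm of Section~\ref{sect:results} on each generated graph $G$, computing $\chi_i(G)$, and verifying $\chi_i(G)\le \Delta(G)+1$. Since one only needs to certify the inequality (not the exact chromatic number), the backtracking routine can additionally be short-circuited: initialise the upper bound with $\Delta(G)+1$ and prune the search as soon as this many colours are consumed; a single successful colouring with $\Delta(G)+1$ colours witnesses the desired inequality. This yields a substantial speedup in practice and makes it very unlikely that the injective chromatic number computation dominates the generation time, so the overall cost is governed by the planar-girth-$5$ generation.

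The main obstacle is purely combinatorial: the number of planar graphs with girth at least $5$ on $21$ vertices is large, and for each graph the worst-case behaviour of the backtracking routine is exponential in $n$. This is mitigated by the girth restriction (which forces sparsity and keeps $\Delta$ small on average), by the dynamic saturation-based vertex selection described in Section~\ref{sect:results}, and by the short-circuit pruning above. The computation having been carried out to completion with no counterexample encountered establishes the proposition.
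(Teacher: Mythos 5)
Your proposal takes essentially the same route as the paper: the proposition is established there by exhaustively generating all planar graphs of girth at least $5$ up to order $21$ with \texttt{geng} and \texttt{planarg} and running the backtracking algorithm of Section~\ref{sect:results} on each graph (about $16\,000$ CPU hours for generation and $48$ for the colouring computations), which is precisely the pipeline you describe. One minor caveat: the two inequalities you cite for the pendant-vertex reduction only yield $\chi_i(G-w)\ge \Delta(G-w)+1$ rather than the needed $\Delta(G-w)+2$; the correct justification is that a degree-$1$ vertex can always be coloured once $\Delta(G)$ colours are available (it only conflicts with the other neighbours of its unique neighbour), so $\chi_i(G-w)=\chi_i(G)$ for any counterexample $G$ --- but as this reduction is an optional optimisation, the verification stands without it.
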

The computation for order $21$ took approximately 16\,000 hours for the generation and 48 hours for determining the injective chromatic number of the graphs.

In~\cite{BI11} Borodin and Ivanova gave in their Figure~1, for every $\Delta \ge 3$, an example of a $2$-connected planar graph with girth $6$ and maximum degree $\Delta$ which is injectively $(\Delta + 1)$-colourable. Smoothing one of the degree $2$ vertices in their example gives a family with the same properties but of girth $5$. A similar idea to the one used  
in the girth at least $4$ case gives infinite families of planar $2$-connected injectively $(\Delta + 1)$-colourable graphs with girth $6$ for every $\Delta \geq 3$. 

Also in this case it is difficult to find $3$-connected examples. 
By filtering the sharp examples obtained from the computation of Proposition~\ref{prop:g5_comp} for $3$-connectivity, using \texttt{countg} from the \texttt{nauty} library~\cite{Mc14}, we obtain the following.
\begin{proposition}
    For $\Delta \ge 4$ there are no $3$-connected planar graphs of maximum degree $\Delta$ for which the injective chromatic number equals $\Delta + 1$ up to order $21$. When $\Delta = 3$, the smallest $3$-connected graph whose injective chromatic number is $4$ is the dodecahedron graph on $20$ vertices. 
\end{proposition}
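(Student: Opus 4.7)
The proof is essentially a computational filtering of the data already produced for Proposition~\ref{prop:g5_comp}, so the plan is mostly to describe how that filtering is carried out and how the dodecahedron is identified as the extremal case for $\Delta=3$.

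First, I would recall that the computation behind Proposition~\ref{prop:g5_comp} already produced, for every $n\le 21$, the complete list of connected planar graphs of girth at least $5$ together with their injective chromatic numbers. From this dataset I would extract, for each $n\le 21$ and each $\Delta\ge 3$, the sublist of graphs with $\chi_i(G)=\Delta+1$; this is exactly the list of graphs sharp for Problem~\ref{prob:luzar}. Then I would pipe this sublist through \texttt{countg} from the \texttt{nauty} suite~\cite{Mc14} with the flag requesting vertex connectivity at least $3$, and count what remains.

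For every $\Delta\ge 4$ and every $n\le 21$, the count returned is zero, which proves the first half of the proposition. For $\Delta=3$, the count is zero for all $n\le 19$, and at $n=20$ exactly one graph survives. To identify this graph as the dodecahedron, I would compute its canonical form with \texttt{nauty}'s \texttt{labelg} and compare it against the canonical form of the dodecahedron graph (which is $3$-connected, planar, cubic, of girth $5$, and has order $20$, so it is a candidate in the search). Equivalently, one may simply verify that the surviving graph is cubic, $3$-connected, planar, vertex-transitive and has girth $5$ and order $20$, which uniquely determines the dodecahedron.

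The only non-routine sanity check is to confirm that the dodecahedron indeed has injective chromatic number equal to $4$: the lower bound $\chi_i\ge 4$ follows because each vertex has $3$ neighbours that must receive pairwise distinct colours, and the upper bound $\chi_i\le 4$ is witnessed by any explicit $4$-colouring produced by our algorithm (or, e.g., by colouring the vertices according to the four disjoint $C_5$'s of a suitable decomposition). The main ``obstacle'' is purely computational, namely the cost of generating all planar graphs of girth at least $5$ up to order $21$, which was already paid in the preceding proposition; the filtering step by \texttt{countg} is cheap and essentially immediate.
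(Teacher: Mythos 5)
Your proposal matches the paper's approach exactly: the paper proves this proposition by taking the sharp examples from the computation behind Proposition~\ref{prop:g5_comp} and filtering them for $3$-connectivity with \texttt{countg}, which is precisely your pipeline. One small caveat in your optional sanity check: the fact that each vertex of the dodecahedron has three neighbours requiring pairwise distinct colours only yields $\chi_i\ge 3$, not $\chi_i\ge 4$; the lower bound $4$ needs a genuine argument (the paper supplies one in its theorem on generalised dodecahedra $D_r$, of which the dodecahedron is the case $r=5$), though this does not affect the validity of the computational proof itself.
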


Let $r > 2$. The generalised dodecahedron $D_r$ has vertex set $$\{u_0,\ldots, u_{r-1}, v_0,\ldots, v_{r-1}, u'_0,\ldots, u'_{r-1}, v'_0, \ldots, v'_{r-1}\}$$ and edge set $$\{u_iu_{i+1},v_iv_{i+1}, u_iu'_i, v_iv'_i, v_i'u_i', u'_iv'_{i+1}\}_{i=0}^{r-1}$$ where indices are taken modulo $r$. A visualisation of this class of graphs was given in Figure~3 in~\cite{CJM21} (using a different labeling).

It is easy to see that the generalised dodecahedra are $3$-connected cubic planar graphs with two $r$-gonal faces and $2r$ pentagonal faces and hence are of girth $5$ when $r\geq 5$.

Denote the vertices of one of the $r$-gonal faces by $u_0, u_1, \ldots, u_{r-1}$ and the vertices of the other by $v_0, v_1, \ldots, v_{r-1}$. For each $u_i, v_i$ denote the remaining vertex by $u_i'$, $v_i'$, respectively. We do this in such a way that $u_i'$ is adjacent to $v_i'$ and $v_{i+1}'$, taking indices modulo $r$.

\begin{theorem}
    The generalised dodecahedra $D_r$ with $r\geq 5$ have $\chi_i(D_r) = 4$ if and only if $r\not\equiv 0 \bmod 3$.
\end{theorem}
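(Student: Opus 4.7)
The easy direction is $r \equiv 0 \pmod 3$. Setting $c(u_i) = c(u_i') = i \bmod 3$ and $c(v_i) = c(v_i') = (i+1) \bmod 3$, a direct check at each of $u_j, u_j', v_j', v_j$ shows the three neighbours receive the three colours $\{0,1,2\}$. Combined with the lower bound $\chi_i(D_r) \geq 3$ (immediate since $D_r$ is cubic), this gives $\chi_i(D_r) = 3 \neq 4$. For the converse $r \not\equiv 0 \pmod 3$, both $\chi_i(D_r) \geq 4$ and $\chi_i(D_r) \leq 4$ must be established.

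For the lower bound, suppose an injective $3$-colouring $c$ with colours $\{0,1,2\}$ exists, and write $a_j := c(u_j)$, $b_j := c(v_j)$, $a_j' := c(u_j')$, $b_j' := c(v_j')$. Since $0+1+2 \equiv 0 \pmod 3$, the three distinct colours around each cubic vertex sum to $0$ mod $3$, giving $a_j' \equiv -a_{j-1}-a_{j+1}$ (from $u_j$), $b_j \equiv -a_j' - a_{j-1}' \equiv a_{j-2}+a_{j-1}+a_j+a_{j+1}$ (from $v_j'$), and, symmetrically, $a_j \equiv b_{j-1}+b_j+b_{j+1}+b_{j+2}$. Substituting these produces the identity $a_{j+3} - a_{j+2} \equiv a_{j-2} - a_{j-3} \pmod 3$, so the first differences $t_j := a_{j+1} - a_j$ satisfy $t_{j+5} = t_j$, and the period of $t$ divides $\gcd(5,r)$. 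If $5 \nmid r$, then $t$ is constant and the cycle-closure $\sum t_j = rc \equiv 0 \pmod 3$ combined with $3 \nmid r$ forces $t \equiv 0$, making $a$ constant and contradicting $a_{j-1} \neq a_{j+1}$. If $5 \mid r$, a similar summation using $3 \nmid r/5$ forces $a$ itself to be $5$-periodic, reducing the problem to $r = 5$; a short enumeration over $(t_0,\dots,t_4) \in (\mathbb{Z}/3)^5$ subject to $\sum t_j \equiv 0$, $t_j + t_{j-1} \neq 0$ (from $a_{j-1} \neq a_{j+1}$) and $t_j + t_{j-2} \neq 0$ (from $a_j' \neq a_{j-1}'$) shows no valid sequence exists.

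For the upper bound, one exhibits an explicit injective $4$-colouring by extending the period-$3$ pattern of the easy direction to the first $3\lfloor r/3 \rfloor$ indices and assigning the fourth colour to the remaining $s := r - 3\lfloor r/3\rfloor \in \{1,2\}$ indices, together with small local adjustments at the wrap-around to make $v_{r-1}, v_{r-1}'$ (and the two affected neighbours) consistent; verification of the neighbourhood constraints at the boundary is routine but splits into the two cases $s = 1$ and $s = 2$. The main obstacle is the $5 \mid r$ subcase of the lower bound, where the period-$5$ recurrence on $t$ does not by itself yield a contradiction and one must explicitly carry out the finite case check in $\mathbb{Z}/3$ to rule out every admissible length-$5$ difference sequence.
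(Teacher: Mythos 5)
Your lower-bound argument ($r\not\equiv 0 \bmod 3$ implies no injective $3$-colouring) is correct but takes a genuinely different route from the paper's. The paper deletes the $u$-cycle and analyses the pentagons $v_iv_i'u_i'v_{i+1}'v_{i+1}$ case by case, showing that any injective $3$-colouring is forced into a period-$3$ pattern, after which the wrap-around gives the contradiction uniformly for all $r\not\equiv 0\bmod 3$. You instead use the observation that the three neighbours of a cubic vertex receive all three colours and hence sum to $0\bmod 3$, converting the colouring into a linear recurrence over $\mathbb{Z}/3$. I checked the coefficient computation ($a_j\equiv a_{j-3}+2a_{j-2}+3a_{j-1}+4a_j+3a_{j+1}+2a_{j+2}+a_{j+3}$), the resulting periodicity $t_{j+5}=t_j$, and the finite check for $5\mid r$: since distance-$1$ and distance-$2$ pairs exhaust all pairs in $\mathbb{Z}/5$, the conditions $t_i+t_j\neq 0$ force all nonzero $t_i$ to be equal with at most one zero entry, and none of the surviving vectors sums to $0$. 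This is sound; your argument is slicker when $5\nmid r$ but pays for it with the separate $5\mid r$ case, whereas the paper's forcing argument is uniform. The $r\equiv 0$ direction is the same explicit colouring in both.

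The genuine weak point is the upper bound $\chi_i(D_r)\le 4$ for $r\not\equiv 0\bmod 3$, which the ``if'' direction requires and which you only sketch. (To be fair, the paper's own proof omits this half entirely.) Your sketch as written fails: keep the period-$3$ pattern on blocks $0,\dots,r-2$ and give colour $3$ to block $r-1$; then for $r\equiv 1\bmod 3$ the vertex $v_{r-1}'$ must differ from $u_{r-1}$ and $v_0'$ (common neighbour $u_{r-1}'$) and from $u_{r-2}$ and $v_{r-2}'$ (common neighbour $u_{r-2}'$), i.e.\ from the colours $3,1,2,0$ --- all four --- so no adjustment confined to block $r-1$ can repair this, and in fact $c(u_{r-1})=3$ is itself forced once the pattern is fixed on the neighbouring blocks. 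A valid $4$-colouring does exist, but the fourth colour must be inserted at different positions on the $u$-side and the $v$-side: for $r=7$, taking $c(u_j)=c(u_j')$ to run $0,1,2,0,1,2,3$ and $c(v_j)=c(v_j')$ to run $1,2,0,1,2,3,0$ satisfies every distance-$2$ constraint. You should exhibit such a patch explicitly for $r\equiv 1$ and $r\equiv 2 \bmod 3$ (or invoke a general upper bound); until then this half of the equivalence is unproven in your write-up, just as it is in the paper.
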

\begin{proof}
    Let $D'$ be the graph obtained from $D_r$ by removing one of the $r$-gonal cycles, say $u_1u_2\ldots u_r$. Suppose $D'$ is injectively $3$-colourable. We take indices modulo $r$. If in one of the pentagons $v_iv_i'u_i'v_{i+1}'v_{i+1}$, the same colour is given to $u_i'$ and $v_{i+1}'$ and to $v_i$ and $v_{i+1}$, then $u_{i+1}'$ and $v_{i+2}$ have the same colour which is a contradiction. Similarly, if the same colour is given to $u_i'$ and $v_{i+1}'$ but not to $v_i$ and $v_{i+1}$, then the same colour is given to $u_{i+1}'$ and $v_{i+2}'$ and to $v_{i+1}$ and $v_{i+2}$, again leading to a contradiction as before. By symmetry, $u_i'$ and $v_i'$ need to have different colours.

    Without loss of generality, $D'$ must be coloured such that $v_i$ and $v_i'$ get colour $j\in\{1,2,3\}$ when $i=3\alpha + j$. If $r\equiv 1\bmod 3$, then $v_0$ and $v_{r-1}'$ have the same colour. If $r\equiv 2\bmod 3$, then $v_0$ and $v_{r-2}$ have the same colour, leading to a contradiction.

    Finally, suppose $r\equiv 0\bmod 3$, then the previous idea leads to an injective $3$-colouring if we give $u_i'$ the colour different from that of $v_i'$ and $v_{i+1}'$.
\end{proof}


\section*{Acknowledgements}

We thank Borut Lu\v zar for interesting discussions on this topic.

Jan Goedgebeur and Jarne Renders are supported by an FWO grant with grant number G0AGX24N and by Internal Funds of KU Leuven. 

The computational resources and services used in this work were provided by the VSC (Flemish Supercomputer Center), funded by the Research Foundation - Flanders (FWO) and the Flemish Government – department EWI.

\newpage


\bibliographystyle{plain}
\bibliography{references}

\newpage
\appendix 

\section{Correctness Tests}\label{app:correctness}
While it is relatively easy to prove the correctness of our algorithm to compute the chromatic number of a graph presented in Section~\ref{sect:results}, 
we also performed various tests for verifying the correctness of its implementation. Our implementation of the algorithm is open source software and can be found on
GitHub~\cite{DGR24Program} where it can be verified and used by other researchers. 

Let $G$ be a graph and let $G^{(2)}$ be the \emph{neighbouring graph} with vertex set $V(G)$ and with edge set $\{uv; N_G(u)\cap N_G(v) \neq \emptyset\}$. It follows that $\chi_i(G)$ corresponds to the chromatic number of $G^{(2)}$. We wrote a program that, given $G$, computes $G^{(2)}$. Together with a program for computing the chromatic number -- we use one which is part of the \texttt{nauty} library~\cite{Mc14} -- we can reproduce the experiments performed in the paper up to certain orders. However, this approach is not as fast as the one described in Section~\ref{sect:results}.
In particular, we checked the counts of graphs attaining a specific injective chromatic number for connected planar graphs of minimum degree $2$ up to order $11$, of girth at least $4$ up to order $14$ and of girth at least $5$ up to order $5$, of $3$-connected planar graphs up to order $12$, of $4$-connected planar graphs up to order $15$ and of $3$-connected cubic planar graphs up to order $28$. All results were in complete agreement with those of the algorithm from Section~\ref{sect:results}.

Our backtracking algorithm
can also be adapted to compute the chromatic number of graphs and we verified this for a selection of graphs this indeed yields the correct chromatic number.


\section{The smallest most symmetric graphs of Table~\ref{tab:sharp_for_LS}}\label{app:hog}
The smallest most symmetric graphs of Table~\ref{tab:sharp_for_LS} for each maximum degree $3\leq \Delta\leq 8$  can among others be inspected on the House of Graphs~\cite{CDG23} by searching for the keywords ``injective chromatic number''. Here we list their specific URLs.

\begin{itemize}
    \item $\Delta = 3$: \url{https://houseofgraphs.org/graphs/52993} 
    \item $\Delta = 4$: \url{https://houseofgraphs.org/graphs/33503} 
    \item $\Delta = 5$: \url{https://houseofgraphs.org/graphs/52994}
    \item $\Delta = 6$: \url{https://houseofgraphs.org/graphs/52995}, \\\url{https://houseofgraphs.org/graphs/52996} 
    \item $\Delta = 7$: \url{https://houseofgraphs.org/graphs/52999} 
    \item $\Delta = 8$: \url{https://houseofgraphs.org/graphs/53043}
\end{itemize}

\section{A direct proof of injective chromatic number of two graphs used in the proof of Theorem~\ref{theor:pentagonFamily}}\label{app:two_graphs_inj_at_least_7}

Here we give a direct (but involved) argument of a fact used in the proof of Theorem~\ref{theor:pentagonFamily}. Let $G$ be the graph of Figure~\ref{graph:G1-triangle}. We will use the labels in the figure for the following proof. In the proof of Theorem~\ref{theor:pentagonFamily}, $G$ is isomorphic to $G_1 - v_0^1v_1^1 - v_0^1v_2^1 - v_1^1v_2^1$ and $G':=G\cup\{v_1d_5, d_5v_5, v_1v_5\}-w_1-w_2-w_3$ is isomorphic to $G_0$. 

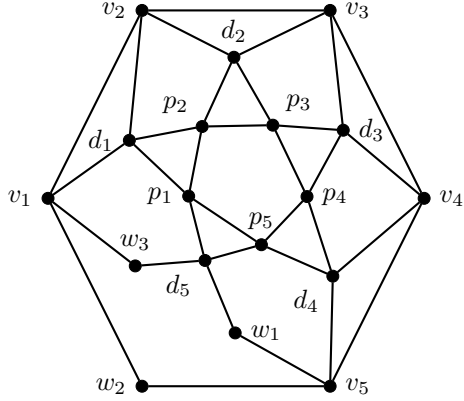
\begin{figure}[!htb]
\centering
\begin{tikzpicture}[main_node/.style={circle,draw,minimum size=1em, fill = black, scale = 0.45}, scale = 0.5]

\node[main_node, label=above right:{$p_3$}] (0) at (1.1203808785672766, 1.7320230105464998) {};
\node[main_node, label=right:{$d_3$}] (1) at (2.9921111633957986, 1.597794822627038) {};
\node[main_node, label={$d_2$}] (2) at (0.08569767763722425, 3.5396292745286027) {};
\node[main_node, label=above left:{$p_2$}] (3) at (-0.7590998422232675, 1.6962288271013106) {};
\node[main_node, label=right:{$p_4$}] (4) at (2.0291194951144584, -0.16730584851390162) {};
\node[main_node, label=right:{$v_4$}] (5) at (5.142857142857142, -0.21428571428571264) {};
\node[main_node, label=right:{$v_3$}] (6) at (2.6433415451047706, 4.785714285714286) {};
\node[main_node, label=left:{$v_2$}] (7) at (-2.3576272593904823, 4.785714285714286) {};
\node[main_node, label=left:{$d_1$}] (8) at (-2.694771223738477, 1.3293384467881109) {};
\node[main_node, label=left:{$p_1$}] (9) at (-1.1175575054668236, -0.15612016618728042) {};
\node[main_node, label={$p_5$}] (10) at (0.8161762670578803, -1.449185043144774) {};
\node[main_node, label=below left:{$d_4$}] (11) at (2.71503307775348, -2.285874081176093) {};
\node[main_node, label=right:{$v_5$}] (12) at (2.6433415451047706, -5.2142857142857135) {};
\node[main_node, label=left:{$v_1$}] (13) at (-4.857142857142858, -0.21428571428571264) {};
\node[main_node, label=below left:{$d_5$}] (14) at (-0.683533091593544, -1.867529562160434) {};
\node[main_node, label=right:{$w_1$}] (15) at (0.11863703047582241, -3.7959411952700535) {};
\node[main_node, label=left:{$w_2$}] (16) at (-2.3576272593904823, -5.2142857142857135) {};
\node[main_node, label={$w_3$}] (17) at (-2.5378248955075144, -2.0129434324065194) {};

 \path[draw, thick]
(0) edge node {} (1) 
(0) edge node {} (2) 
(0) edge node {} (3) 
(0) edge node {} (4) 
(1) edge node {} (4) 
(1) edge node {} (5) 
(1) edge node {} (6) 
(2) edge node {} (3) 
(2) edge node {} (6) 
(2) edge node {} (7) 
(3) edge node {} (8) 
(3) edge node {} (9) 
(4) edge node {} (10) 
(4) edge node {} (11) 
(5) edge node {} (6) 
(5) edge node {} (11) 
(5) edge node {} (12) 
(6) edge node {} (7) 
(7) edge node {} (8) 
(7) edge node {} (13) 
(8) edge node {} (9) 
(8) edge node {} (13) 
(9) edge node {} (10) 
(9) edge node {} (14) 
(10) edge node {} (11) 
(10) edge node {} (14) 
(11) edge node {} (12) 
(12) edge node {} (15) 
(12) edge node {} (16) 
(13) edge node {} (16) 
(13) edge node {} (17) 
(14) edge node {} (15) 
(14) edge node {} (17) 
;
\end{tikzpicture}
\caption{The graph $G$ isomorphic to $G_1 - v_0^1v_1^1 - v_0^1v_2^1 - v_1^1v_2^1$.}
\label{graph:G1-triangle}
\end{figure}

\begin{lemma}
    The injective chromatic number of $G$ depicted in Figure~\ref{graph:G1-triangle} is at least $8$.
\end{lemma}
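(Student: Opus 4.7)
The plan is a proof by contradiction. Suppose $G$ admits an injective $7$-colouring $c$, which we regard as a proper colouring of the \emph{neighbouring graph} $G^{(2)}$ (two vertices joined whenever they share a common neighbour in $G$). The strategy is to exploit the strong pentagonal symmetry of $G$ to force a conflict.

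First, I would verify that the pentagon vertices $p_1,\ldots,p_5$ pairwise share a common neighbour in $G$: adjacent pairs share the $d_i$ between them, while non-adjacent pairs share the pentagon vertex between them. Hence $\{p_1,\ldots,p_5\}$ induces a $K_5$ in $G^{(2)}$, and we may assume $c(p_i)=i$. Taking indices modulo~$5$, a direct neighbourhood check then shows that each $d_i$ shares a common neighbour with $p_j$ precisely when $j\ne i+3$, so
\[
c(d_i)\in\{c(p_{i+3}),\,6,\,7\}.
\]
Moreover $d_1,\ldots,d_5$ themselves form a $5$-cycle in $G^{(2)}$ (consecutive $d_i, d_{i+1}$ share the pentagon vertex between them), so consecutive colours $c(d_i), c(d_{i+1})$ must differ.

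Next I would compute, for each outer vertex $v_i$, its neighbourhood in $G^{(2)}$. These contain two or three pentagon vertices together with three or four of the $d_j$'s; in the \textbf{main case} where every $d_j$ takes its native colour $c(p_{j+3})$, the forbidden colours cover all of $\{1,\ldots,5\}$, so $c(v_i)\in\{6,7\}$ for every~$i$. On the other hand, a direct inspection shows that the subgraph of $G^{(2)}$ induced on $\{v_1,\ldots,v_5\}$ is $K_5$ minus the two edges $(v_1,v_4)$ and $(v_2,v_5)$ — a graph of chromatic number~$3$. Hence $\{v_1,\ldots,v_5\}$ cannot be coloured using only the two colours $\{6,7\}$, a contradiction.

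For the remaining subcases where some $d_j$ lies in $\{6,7\}$, I would propagate the constraints systematically. Each such swap reintroduces one pentagon colour into the palette of neighbouring $v_i$'s but tightens constraints elsewhere, and the crucial extra leverage comes from the $w$-triangle: $w_1, w_2, w_3$ form a triangle in $G^{(2)}$ since each pair shares one of $v_1, v_5, d_5$ as a common neighbour, so $c(w_1), c(w_2), c(w_3)$ are three distinct colours avoiding $\{c(v_1), c(v_5), c(d_5)\}$. Combining this with the $5$-cycle constraint on the $d_i$'s and the tight palettes of the $v_i$'s, each subcase closes quickly: the cascade either forces two $G^{(2)}$-adjacent vertices to share a colour, or it empties the palette at some $w_i$. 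The main obstacle is therefore the bookkeeping in this case analysis — one has to tabulate the few palette-consistent assignments of $(c(d_1),\ldots,c(d_5))$ — but the short palettes keep the branching manageable and every branch closes by a short forcing argument rather than any deeper combinatorial idea.
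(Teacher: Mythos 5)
Your framework is sound and genuinely different from the paper's: the paper starts from the $6$-clique $\{v_1,v_2,d_1,d_2,p_1,p_2\}$ of the neighbouring graph $G^{(2)}$ and branches on whether $c(v_3)=c(p_1)$ and/or $c(p_3)=c(v_1)$, whereas you start from the pentagon $K_5$ and reduce everything to a list-colouring of $\{v_1,\dots,v_5\}$. I verified your structural claims: the $p_i$ do form a $K_5$ in $G^{(2)}$; indeed $c(d_i)\in\{c(p_{i+3}),6,7\}$ with consecutive $d_i$ conflicting; the $v_i$ induce $K_5$ minus $v_1v_4$ and $v_2v_5$ (the edges $v_1v_5$, $v_1d_5$, $v_5d_5$ being supplied by the $w_j$); and in the all-native case every $v_i$ is confined to $\{6,7\}$ while $\{v_2,v_3,v_4\}$ is a triangle in $G^{(2)}$. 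So the main case is complete and correct.

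Two problems remain. First, your claim that $c(w_1),c(w_2),c(w_3)$ must avoid $\{c(v_1),c(v_5),c(d_5)\}$ is false: for instance $N(w_2)=\{v_1,v_5\}$ and $N(v_1)=\{v_2,d_1,w_2,w_3\}$ are disjoint, so $w_2$ and $v_1$ share no common neighbour and are not adjacent in $G^{(2)}$, and the same holds for every pair $(w_j,x)$ with $x\in\{v_1,v_5,d_5\}$. In fact each $w_j$ has exactly six $G^{(2)}$-neighbours (only four outside $\{w_1,w_2,w_3\}$), so the $w_j$ can always be coloured last from a $7$-colour palette and can never supply the ``crucial extra leverage'' you invoke; the only leverage the $w_j$ provide is the $G^{(2)}$-triangle they create on $\{v_1,v_5,d_5\}$, which you have already, correctly, folded into the $v$-palettes. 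Second, and more seriously, the subcases with some $c(d_j)\in\{6,7\}$ -- the bulk of the proof -- are only asserted to close. They do close within your constraint system (e.g.\ $c(d_2)=6$, $c(d_4)=7$ forces $c(v_3)=5$ and $c(v_4)=2$, whence $v_1$ and $v_2$ are both confined to $\{7\}$ although they conflict; note this branch genuinely needs the $w$-induced edge between $v_1$ and $d_5$), but as written you have a plan plus one completed case, and the one concrete tool you name for the remaining cases is unusable. To finish you must either tabulate the admissible tuples $(c(d_1),\dots,c(d_5))$ and kill each, or find a uniform argument; the paper's route via the $6$-clique and the near-symmetry between the inner and outer cycles achieves the latter more economically.
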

\begin{proof}
    Suppose $G$ has an injective $7$-colouring $c$. We use the same labels as in Figure~\ref{graph:G1-triangle}.

    Note that $v_1, v_2, d_1, d_2, p_1, p_2$ have pairwise distinct colours in $c$. Since $c$ uses at most $7$ colours we have three options, either $c(v_3)=c(p_1)$ and $p_3$ get a seventh colour, $c(p_3)=c(v_1)$ and $v_3$ get a seventh colour, or both $c(v_3)=c(p_1)$ and $c(p_3)=c(v_1)$.

    Suppose $c(v_3)=c(p_1)$ and $c(p_3)=c(v_1)$. Due to the same arguments, but for $v_3$, $v_4$, $v_5$, $d_3$, $d_4$, $p_3$, $p_4$, $p_5$, we see that $c(v_3) = c(p_5)$ and/or $c(p_3)=c(v_5)$, which leads to a contradiction as $p_5$ and $p_1$ share a common neighbour and $v_5$ and $v_1$ share a common neighbour.

    Hence, we are in one of the previous cases. Assume $c(v_1)=c(p_3)$. If $c(p_3) = c(v_5)$, we get a contradiction, so we have that $c(v_3)=c(p_5)$. It must hold in $c$ that $p_1, p_2, p_3, p_4$ and $p_5$ get pairwise distinct colours. Since $d_5$ shares a common neighbour with $p_1, p_2, p_4, p_5$ and $v_1$, $c$ uses a sixth colour for $d_5$. We see that one of $d_1, v_2, d_2$ needs to use a seventh colour as $d_1, v_2, d_2, p_1, p_2, p_3$ are pairwise injective neighbours, so need six different colours and they cannot use $c(v_3)=c(p_5)$. 
    If $d_1$ has the seventh colour, then $c(d_2)=c(d_5)$ and $c(v_2)=c(p_4)$. However, $d_3,d_4,v_4,v_5$ need four different colours, but only have three possibilities left.
    If $v_2$ or $d_2$ have the seventh colour, then a similar reasoning show that $d_3,d_4,v_4,v_5$ can only use three colours in $c$.

    Finally, assume that $c(p_1) = c(v_3)$. Then $c(p_3) = c(v_5)$. By symmetry, we also get a contradiction in this case. Hence, non injective $7$-colouring can exist for $G$.
\end{proof}
\begin{lemma}
    The injective chromatic number of $G'$ is at least $8$.
\end{lemma}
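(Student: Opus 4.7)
The plan is to exploit the $5$-fold rotational symmetry of $G'$, which is isomorphic to $G^4_0$. Relabel its $15$ vertices as an outer pentagon $V = \{v_1, \ldots, v_5\}$, an inner pentagon $P = \{p_1, \ldots, p_5\}$, and a middle layer $D = \{d_1, \ldots, d_5\}$, where $d_i$ is adjacent to $v_i, v_{i+1}, p_i, p_{i+1}$ (all indices mod~$5$). A routine inspection of neighbourhoods in the distance-$2$ graph $(G')^{(2)}$ yields: (i) $V$ and $P$ each induce a $5$-clique; (ii) $D$ induces a $5$-cycle whose only edges are $d_i d_{i+1}$; (iii) $v_i$ and $p_j$ are injective neighbours iff $j \in \{i-1, i, i+1\} \bmod 5$; (iv) $d_i$ and $v_j$ are injective neighbours iff $j \ne i-2 \bmod 5$, and by the $V \leftrightarrow P$ symmetry the same holds for $d_i$ and $p_j$.

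Suppose for contradiction that $c$ is an injective $7$-colouring of $G'$. By (i), $|c(V)| = |c(P)| = 5$. Let $I := \{(v_i, p_j) : c(v_i) = c(p_j)\}$; by (iii) each such pair satisfies $|i - j| \equiv 2 \pmod 5$, so $I$ is a matching in the bipartite graph of ``distance-$2$'' pairs on $V \cup P$, a single $10$-cycle. From $|c(V) \cup c(P)| = 10 - |I| \leq 7$ we get $|I| \geq 3$, leaving exactly $|I| - 3$ colours \emph{outside} $c(V \cup P)$.

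By (iv), the only $V \cup P$ colours that $c(d_i)$ is permitted to take are $c(v_{i-2})$ and $c(p_{i-2})$. If $v_{i-2}$ is matched in $I$, its partner must lie in $\{p_i, p_{i+1}\}$, whose colours are already forbidden for $d_i$; hence $c(d_i) = c(v_{i-2})$ is feasible only when $v_{i-2}$ is unmatched in $I$, and symmetrically for $p_{i-2}$. If neither $v_{i-2}$ nor $p_{i-2}$ is unmatched, then $c(d_i)$ must be one of the $|I| - 3$ outside colours.

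The three possible values of $|I|$ each give a contradiction by a short counting argument. If $|I| = 3$, no outside colours exist and the $2 + 2 = 4$ unmatched vertices in $V \cup P$ cover at most $4$ of the indices $i$, leaving some $d_i$ with no legal colour. If $|I| = 4$, only $1 + 1 = 2$ indices are covered by $V \cup P$ colours, so at least three $d_i$'s must all share the single outside colour, which is impossible since $D \cong C_5$ in $(G')^{(2)}$ has independence number $2$. If $|I| = 5$, no $d_i$ can use a $V \cup P$ colour, so all five must be coloured with the $2$ outside colours, but each colour class can cover at most $2$ of them, so at most $4 < 5$. Thus $\chi_i(G') \geq 8$. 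I expect the main obstacle to lie in the careful case check of (iii) and (iv); everything downstream is short pigeonhole-style counting combined with the $C_5$ independence bound.
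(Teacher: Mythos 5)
Your proof is correct, but it takes a genuinely different route from the paper. You first recast $G'$ via its $5$-fold symmetry as two pentagons $V$, $P$ plus a layer $D$ with $N(d_i)=\{v_i,v_{i+1},p_i,p_{i+1}\}$, and your claims (i)--(iv) about the common-neighbour graph all check out: $V$ and $P$ are $5$-cliques, $D$ induces exactly $C_5$ (since $N(d_i)\cap N(d_{i+2})=\emptyset$), $v_i$ and $p_j$ fail to be injective neighbours precisely when $j\equiv i\pm 2$, and $d_i$'s only non-neighbour in each pentagon is the index $i-2$. The resulting global argument --- bounding the matching $I$ of repeated colours between $V$ and $P$ by $3\le|I|\le 5$, observing that $c(d_i)$ can reuse a pentagon colour only at an \emph{unmatched} vertex of index $i-2$, and finishing each case by pigeonhole against the independence number $2$ of $C_5$ --- is sound in all three cases. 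The paper instead proves the bound for the asymmetric graph $G$ of Figure~\ref{graph:G1-triangle} by a local case analysis (starting from the $6$-clique $v_1,v_2,d_1,d_2,p_1,p_2$ in the common-neighbour graph and splitting on whether $c(v_3)=c(p_1)$ and/or $c(p_3)=c(v_1)$), and then disposes of $G'$ with the one-line remark that the same method still applies. Your argument buys a self-contained, symmetric, and arguably more transparent proof for $G'$ specifically; its limitation is that it relies on the full rotational symmetry and so would not transfer to $G$ itself, whereas the paper's local analysis covers both graphs at once.
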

\begin{proof}
    One can see that the exact same proof method also works for $G'$ as $d_5, v_1$ and $v_5$ still pairwise share a common neighbour in $G'$.
\end{proof}

\clearpage

\end{document}